\def\reg{\mathrm{reg}}
\def\s{\mathrm{s}}
\def\f{\mathsf{f}}
\def\br{\mathbf r}
\def\BB{Bia\l{}ynicki-Birula\xspace}
\def\idef{}
\begin{document}
\author{Sergey Mozgovoy}
\title{Motivic classes of Quot-schemes on surfaces}

\address{School of Mathematics, Trinity College Dublin, Ireland\newline\indent
Hamilton Mathematics Institute, Ireland}

\email{mozgovoy@maths.tcd.ie}

\begin{abstract}
Given a locally free coherent sheaf on a smooth algebraic surface, we consider the Quot-scheme parametrizing zero-dimensional quotients of the sheaf and find the corresponding motivic class in the Grothendieck ring of algebraic varieties.
\end{abstract}

\maketitle
\section{Introduction}
Let $X$ be a smooth algebraic variety and $E$ be a rank $r$ locally free coherent sheaf over~$X$.
For any $n\ge0$, let $\Quot(E,n)$ denote Grothendieck's \Quot-scheme \cite{grothendieck_techniques} parametrizing all epimorphisms $E\to F$, where $F$ is a zero-dimensional coherent sheaf with $\dim\Ga(X,F)=n$, modulo automorphisms of~$F$.
If~$X$ is projective, then so is $\Quot(E,n)$.

If $X$ is a curve, then $\Quot(E,n)$ is a smooth, connected variety of dimension $rn$.
The numbers of points of $\Quot(\cO_X^{\oplus r},n)$ over finite fields (as well as their Poincar\'e polynomials) were computed in~\cite{bifet_sur}
(see also \cite{ghione_effective,bifet_abel-jacobi}).
Motivic classes of $\Quot(\cO_X^{\oplus r},n)$ were computed in \cite{bano_chow} and motivic classes of $\Quot(E,n)$ for general $E$ were computed in \cite{bagnarol_motive}.
The general formula for motivic classes has the form
\begin{equation}\label{dim 1}
\sum_{n\ge0}[\Quot(E,n)]t^n=\Exp\rbr{[X\xx\bP^{r-1}]t},
\end{equation}
where $\Exp$ is the plethystic exponential (see \S\ref{mot classes}).

If $X$ is a surface, then $\Quot(E,n)$ is an irreducible variety of dimension $rn+n$ \cite{li_algebraic,gieseker_moduli,ellingsrud_irreducibility}.
The \Quot-scheme $\Quot(\cO_X,n)$ is the Hilbert scheme of points $\Hilb^n(X)$ which is smooth \cite{fogarty_algebraic}.
The numbers of points of $\Quot(\cO_X,n)$ over finite fields (as well as their Poincar\'e polynomials) were computed in \cite{gottsche_betti} and the numbers of points of $\Quot(\cO_X^{\oplus r},n)$ over finite fields were computed in \cite{yoshioka_betti}.
Motivic classes of $\Quot(\cO_X,n)$ were computed in \cite{gottsche_motive}.
In this paper we will prove the following result

\begin{theorem}
Let $E$ be a rank $r$ locally free sheaf over a smooth surface $X$. Then
$$\sum_{n\ge0}[\Quot(E,n)]t^n
=\Exp\rbr{\frac{[X\xx \bP^{r-1}]t}{1-\bL^rt}},$$
where $\bL=[\bA^1]$.
\end{theorem}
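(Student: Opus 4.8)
The shape of the answer---a plethystic exponential whose argument depends \emph{linearly} on $[X]$---indicates that the entire $X$-dependence should be peeled off by a power-structure (equivalently plethystic) argument, reducing the theorem to a purely local computation near a single point. \textbf{Stage 1 (reduction to $\bA^2$).} A zero-dimensional sheaf $F$ splits canonically as $\bop_x F_x$ over its finite support, and since $\Hom(E,-)$ localizes, a surjection $E\to F$ decomposes accordingly; hence $\Quot(E,n)$ stratifies by support configurations, the stratum over distinct points of lengths $n_1,\dots,n_k$ being a product of \emph{punctual} \Quot-schemes. As $E$ is locally free of rank $r$ and $X$ is a smooth surface, the formal neighbourhood of any point is that of the origin of $\bA^2$ with $E$ replaced by $\cO^{\oplus r}$, so each punctual factor is isomorphic to $Q_{m}:=\Quot(\cO_{\bA^2}^{\oplus r},m)_0$, independent of the point and of $E$. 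The geometric power structure on the Grothendieck ring (Gusein-Zade--Luengo--Melle-Hern\'andez, in the form used by G\"ottsche--Soergel) then gives
\[\sum_{n\ge0}[\Quot(E,n)]t^n=\rbr{\textstyle\sum_{m\ge0}[Q_m]t^m}^{[X]}=\Exp\rbr{[X]\cdot\Log\textstyle\sum_{m\ge0}[Q_m]t^m},\]
the second equality being the compatibility of the power operation with $\Exp$. Thus there is a universal local series $g(t):=\Log\sum_m[Q_m]t^m$, depending only on $r$, and it remains to identify $g(t)=[\bP^{r-1}]t/(1-\bL^r t)$. Equivalently, testing at $X=\bA^2$, it suffices to prove the single affine identity $\sum_n[\Quot(\cO_{\bA^2}^{\oplus r},n)]t^n=\Exp\rbr{\bL^2[\bP^{r-1}]t/(1-\bL^r t)}$ and then replace $[\bA^2]=\bL^2$ by $[X]$.

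\textbf{Stage 2 (the affine computation).} I would describe $\Quot(\cO_{\bA^2}^{\oplus r},n)$ through its ADHM-type linear data: tuples $(B_1,B_2,v_1,\dots,v_r)$ with $B_1,B_2\in\End(V)$, $\dim V=n$, $[B_1,B_2]=0$ and the $v_i\in V$ generating $V$ over $\bk[B_1,B_2]$, taken modulo $\GL(V)$. The torus $T=\Gm^2$ rescaling $(B_1,B_2)$ acts, with fixed points exactly the monomial quotients, indexed by $r$-tuples of partitions $\la=(\la^{(1)},\dots,\la^{(r)})$ with $\sum_i|\la^{(i)}|=n$, each supported at the origin. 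Choosing a generic $\Gm\sbs T$ with well-defined limits, the motivic \BB decomposition gives $[\Quot(\cO_{\bA^2}^{\oplus r},n)]=\sum_{\la}[\,C_\la\,]$, where $C_\la$ is the attracting set carved out of the linear data by the positive-weight part of the tangent space; using the explicit coordinates one checks each $C_\la$ is an affine space, contributing $\bL^{d(\la)}$ with $d(\la)$ the number of positive $T$-weights of the Zariski tangent space $\Hom(\ker,F)$ at the monomial point $\la$. This character is computed by the usual arm--leg (hook) bookkeeping, now taken over all ordered pairs of the $r$ diagrams, which is the rank-$r$ refinement of the Ellingsrud--Str\o mme count for $\Hilb(\bA^2)$.

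\textbf{Main obstacle.} The heart of the proof, and the step I expect to be hardest, is the resulting resummation: one must pin down $d(\la)$ precisely and then show that $\sum_{\la}\bL^{d(\la)}t^{|\la|}$ equals the closed form $\Exp\rbr{\bL^2[\bP^{r-1}]t/(1-\bL^r t)}$. The natural route is to factor the multivariate partition sum columnwise, so that the denominator $1-\bL^r t$ arises from summing over the heights of a column and the numerator $[\bP^{r-1}]=1+\bL+\dots+\bL^{r-1}$ records in which of the $r$ diagrams a given box first appears; recognizing the bookkeeping as a clean geometric series in each column is exactly where the weight count of Stage 2 must come out right. A secondary point to secure is that the chosen $\Gm\sbs T$ genuinely has affine-space attracting cells (\ie that the fixed loci are smooth and the matrix equations become triangular in suitable coordinates), so that the motivic \BB sum really is a sum of powers of $\bL$ rather than of more complicated classes.
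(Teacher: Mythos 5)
Your Stage 1 is exactly the paper's own reduction (Theorem~\ref{quot-quot0}, due to Ricolfi, combined with the compatibility of power structures and $\Exp$ from Proposition~\ref{PS-Exp}), but Stage 2 rests on a false premise: $\Quot(\cO_{\bA^2}^{\oplus r},n)$ is \emph{singular} once $r\ge2$ and $n\ge2$, so the motivic \BB decomposition you invoke --- attracting sets that are affine spaces of dimension equal to the number of positive weights on the Zariski tangent space --- is not available for this variety. Concretely, at the torus-fixed quotient $\cO_{\bA^2}^{\oplus2}\onto F=k_0^{\oplus2}$ (where $k_0$ is the length-one skyscraper at the origin; this is the pair of partitions $((1),(1))$), with kernel $K$, the long exact sequence of $\Hom(-,F)$ gives
$$\dim\Hom(K,F)=\dim\Hom(\cO^{\oplus2},F)-\dim\Hom(F,F)+\dim\Ext^1(F,F)=4-4+8=8,$$
while $\dim\Quot(\cO_{\bA^2}^{\oplus2},2)=rn+n=6$. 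The \BB theorem requires smoothness; at such a point your exponent $d(\la)$ is not the dimension of any cell, and there is no theorem asserting that $[\Quot]$ is a sum of such powers of $\bL$. This is precisely the difficulty the paper's argument is designed to avoid: it never applies \BB to a Quot scheme, but identifies the \emph{punctual} scheme $\Quot(\cO_{\bA^2}^{\oplus r},n)_0$ with the nilpotent Nakajima variety $\cL(n,r)$ of the Jordan quiver (Propositions \ref{quiv-descript} and \ref{L2 and L}; the nontrivial input is that the coframing maps $f_i^*$ vanish automatically on nilpotent semistable representations of the preprojective algebra), and $\cL(n,r)$ is a union of attracting sets inside the \emph{smooth} Nakajima variety $\cM(n,r)$ of framed torsion-free sheaves on $\bP^2$, where the commutativity equation $[B_1,B_2]=0$ is deformed to the moment-map equation. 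Smoothness of that ambient space is what makes the cells affine. The paper's closing remark makes the distinction explicit: $\cM^2(n,r)=\Quot(\cO_{\bA^2}^{\oplus r},n)$ is in general a \emph{proper} subvariety of $\cM(n,r)$, so your commuting ADHM data describe the singular Quot scheme, not a Nakajima variety.

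There are two further gaps. First, your torus is too small: for $r\ge2$ the fixed locus of $T=\Gm^2$ (rescaling only $B_1,B_2$) is not the finite set of monomial quotients --- already for $n=1$ it is $\set0\xx\bP^{r-1}$, since the surjection $k^r\to k$ is untouched by $T$. One must use the larger torus $\Gm^2\xx\Gm^r$ acting also on the framing vectors $v_1,\dots,v_r$ (this is the torus of Nakajima--Yoshioka), and then a generic one-parameter subgroup of it. Second, the step ``replace $[\bA^2]=\bL^2$ by $[X]$'' is not legitimate in $K(\Var)$: from $\Exp(\bL^2g(t))=\Exp\rbr{\bL^2[\bP^{r-1}]t/(1-\bL^rt)}$ and injectivity of $\Exp$ you get $\bL^2g(t)=\bL^2[\bP^{r-1}]t/(1-\bL^rt)$, but $\bL$ is a zero divisor in $K(\Var)$ (by a theorem of Borisov), so $\bL^2$ cannot be cancelled; this route proves the theorem only after localizing at $\bL$, \ie in $K'(\Var)$, which is strictly weaker. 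The correct reduction is to the punctual series itself, as in Theorem~\ref{quot-quot0}. Finally, the resummation you defer as the ``main obstacle'' is exactly the computation the paper quotes, namely \cite[Corollary 3.10]{nakajima_lecturesa} (or alternatively the quiver-variety formula \eqref{class1}); since the cells and exponents feeding into it are never established, essentially nothing of Stage 2 survives as written, though it can be repaired by replacing $\Quot(\cO_{\bA^2}^{\oplus r},n)$ with $\cM(n,r)\sps\cL(n,r)$ --- at which point you have reconstructed the paper's proof.
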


We will also give an alternative proof of the formula \eqref{dim 1}.
The idea of the computation goes back to \cite{gottsche_betti,yoshioka_betti}, where it was observed that in order to compute invariants of $\Quot(E,n)$ it is enough to compute invariants of the punctual quotient scheme $\Quot(E,n)_x$ consisting of quotients concentrated at one point $x\in X$.
The corresponding result in the motivic context was proved in~\cite{ricolfi_motive} (see also \cite{gusein-zade_power}).
The punctual quotient scheme can be interpreted as a quiver variety (see Proposition \ref{quiv-descript}).
In the case $\dim X=1$, this quiver variety is a nilpotent version of non-commutative Hilbert schemes studied in \cite{reineke_cohomology,engel_smooth}.
In the case $\dim X=2$, this quiver variety is a nilpotent version of Nakajima quiver varieties~ \cite{nakajima_instantons}.
In both cases the relevant quiver varieties admit a cellular decomposition and their motivic classes are well-understood.
It is an open problem to determine invariants of the punctual quotient schemes for $\dim X=3$,
although there is a work-around based on considering virtual motivic invariants (see \eg \cite{behrend_motivic}) and, essentially, reducing the question to the dimension two case.

In view of the above formula, we observe that for $X=\bA^2$ or $X=\bP^2$, the quotient scheme $\Quot(E,n)$ has a motivic class which is a polynomial in $\bL$ with non-negative coefficients.
This suggests that $Q=\Quot(E,n)$ should be pure (meaning that the cohomology groups $H^i_c(Q,\bC)$ are pure of weight $i$ for all $i\ge0$).
One could ask if, more generally, $\Quot(E,n)$ is pure for any smooth projective surface $X$.
Note that in this case $\Quot(\cO_X,n)$ is also smooth and projective, hence pure.

\section{Motivic classes and power structures}
\label{mot classes}

\sec[Motivic classes]
Let $\Var=\Var_k$ be the category of algebraic varieties over a field $k$ of characteristic zero.
Define the Grothendieck ring $K(\Var)$ of algebraic varieties over $k$ to be the free abelian group generated by isomorphism classes of objects in $\Var$ modulo the relations
$$[X]=[Y]+[X\ms Y]$$
for any variety $X$ and a closed subvariety $Y\sbs X$.
The ring structure is defined by $[X]\cdot [Y]=[X\xx Y]$, for $X,Y\in\Var$.
Let $K'(\Var)$ be the localization of $K(\Var)$ with respect to $\bL=[\bA^1]$.
The elements of $K(\Var)$ and $K'(\Var)$ will be called motivic classes.

\begin{remark}\label{pre-la}
Define a \idef{pre-\la-ring} structure on a commutative ring $R$ to be a group homomorphism
$$\si_t:(R,+)\to (1+tR\pser t,*),\qquad a\mto\si_t(a)= \sum_{n\ge0}\si_n(a)t^n,$$
such that $\si_t(a)=1+at+O(t^2)$.
The ring $K(\Var)$ is equipped with a pre-\la-ring structure 
$$\si_t([X])=\sum_{k\ge0}[S^kX]t^k,$$
where $S^kX=X^k/\fS_k$ is the $k$-th symmetric power,
for any quasi-projective variety $X$.
\end{remark}

There is an involutive ring homomorphism $K'(\Var)\to K'(\Var)$ \cite{mozgovoy_translation},
given by 
$$[X]\mto[X]\dual=\bL^{-\dim X}[X]$$
for every smooth, projective, connected variety $X$.
It satisfies $(\bL^n)\dual=\bL^{-n}$, for any $n\in\bZ$.

For $k=\bC$, define the virtual Poincar\'e polynomial
$$P:K'(\Var)\to\bZ[t^{\pm\oh}],\qquad 
[X]\mto\sum_{p,q,n}(-1)^n h^{p,q}(H^n_c(X,\bC))t^{\oh(p+q)},$$
where $h^{p,q}(H^n_c(X,\bC))$ is the dimension of the $(p,q)$-type Hodge component of the mixed Hodge structure on $H^n_c(X,\bC)$.
We have
$$P(X;t)=P([X];t)=\sum_n (-1)^n\dim H^n(X,\bC)t^{n/2}$$
for any smooth, projective variety $X$.
If the motivic class of an algebraic variety $X$ is a polynomial in $\bL$, then it coincides with the virtual Poincar\'e polynomial $P(X;\bL)$.
For any algebraic variety $X$, we have
$$P([X]\dual;t)=P([X];t\inv).$$

\sec[Power structures]
Define a \idef{power structure} over a commutative ring~$R$ to be a map
$$(1+tR\pser t)\xx R\to 1+tR\pser t,\qquad
(f,a)\mto f^a,$$
satisfying the following properties (\cf \cite{gusein-zade_power})
\begin{enumerate}
\item $f^0=1$,
\item $f^{a+b}=f^af^b$,
\item $f^1=f$,
\item \label{mult1} $f^{ab}=(f^a)^b$,
\item \label{mult2} $(fg)^a=f^ag^a$,
\item \label{PS:terms} $(1+t)^a=1+at+O(t^{2})$
\item \label{PS:power} $f(t^n)^a=f(t)^a|_{t\mto t^n}$, for all $n\ge1$,
\item \label{PS:cont}
it is continuous, meaning that for any $k\ge0$ there exists $n\ge0$ such that the $k$-jet of $f^a$ (\ie $f^a\pmod{t^{k+1}}$) is determined by the $n$-jet of $f$.
\end{enumerate}

\begin{remark}
A power structure over $R$ is uniquely determined by the pre-\la-ring structure on~$R$ given by 
$\si_t(a)=(1-t)^{-a}$ (see \cite{gusein-zade_power}).
Conversely, if a pre-\la-ring structure satisfies $\si_t(1)=\sum_{n\ge0}t^n$, then one can construct the corresponding power structure 
by first defining $(1-t^i)^{-a}=\sum_{n\ge0}\si_n(a)t^{in}$ and then defining
$$\rbr{\prod_{i\ge1}(1-t^i)^{-f_i}}^a
=\prod_{i\ge1}(1-t^i)^{-af_i}.$$
\end{remark}

There exists a power structure on the ring $K(\Var)$ (see \eg \cite{gusein-zade_powerb}) 
corresponding to the pre-\la-ring structure from Remark
\ref{pre-la}
$$(1-t)^{-[X]}=\si_t([X])=\sum_{k\ge0}[S^kX]t^{k}.$$

\sec[Plethystic exponentials]
Define a \idef{plethystic exponential} over a commutative ring $R$ to be a group homomorphism
$$\Exp:(tR\pser t,+)\to(1+tR\pser t,*)$$
satisfying the following properties
\begin{enumerate}
\item \label{E:tn} $\Exp(t)=(1-t)\inv$,
\item \label{E:terms} $\Exp(at)=1+at+O(t^{2})$,
\item \label{E:power} $\Exp(f(t^n))=\Exp(f(t))|_{t\mto t^n}$.
\item \label{E:cont} it is continuous, meaning that for every $k\ge0$ there exists $n\ge0$ such that the $k$-jet of $\Exp(f)$ is determined by the $n$-jet of $f$.
\end{enumerate}

\begin{remark}
There is a $1-1$ correspondence between pre-\la-ring structures on $R$ with $\si_t(1)=\sum_{n\ge0}t^n$ and plethystic exponentials, given by $\Exp\rbr{\sum_{i\ge1}f_it^i}=\prod_{i\ge1}\si_{t^i}(f_i)$.
\end{remark}

\medskip
Note that by continuity and the fact that $\Exp(at^n)=1+at^n+O(t^{n+1})$,
we have $$\Exp\rbr{\sum_{i\ge1}f_it^i}=\prod_{i\ge1}\Exp(f_it^i).$$
From this we conclude that \Exp is an isomorphism.
Let $\Log$ be its inverse map.

\begin{proposition}(\cf \cite{mozgovoy_computational})
\label{PS-Exp}
There is a $1-1$ correspondence between power structures and plethystic exponentials on $R$.
They are related by
$$\Exp\rbr{\sum_{n\ge1}f_n t^n}=\prod_{n\ge1}(1-t^n)^{-f_n},\qquad f^a=\Exp(a\Log(f)).$$
\end{proposition}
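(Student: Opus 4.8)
The plan is to factor the asserted correspondence through the pre-\la-ring structures on $R$ that are normalized by $\si_t(1)=\sum_{n\ge0}t^n$, since the two remarks preceding the proposition already put these in bijection with power structures on one side and with plethystic exponentials on the other. On the power-structure side, a power structure determines the pre-\la-ring $\si_t(a)=(1-t)^{-a}$, for which $\si_t(1)=(1-t)\inv=\sum_{n\ge0}t^n$, and conversely such a pre-\la-ring determines a power structure; on the plethystic side, a pre-\la-ring with this normalization corresponds to the plethystic exponential $\Exp\rbr{\sum_{i\ge1}f_it^i}=\prod_{i\ge1}\si_{t^i}(f_i)$. Composing the two bijections yields a bijection between power structures and plethystic exponentials, and substituting $\si_{t^n}(f_n)=(1-t^n)^{-f_n}$ into the product already gives the first displayed identity $\Exp\rbr{\sum_{n\ge1}f_nt^n}=\prod_{n\ge1}(1-t^n)^{-f_n}$.

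Next I would verify the second identity and, with it, that the composite really is the stated power structure. Writing $g=\Log f=\sum_n g_nt^n$, so that $f=\prod_n(1-t^n)^{-g_n}$, the axioms $(fg)^a=f^ag^a$ and $f^{ab}=(f^a)^b$ (the latter applied to each factor $(1-t^n)^{-g_n}$) give $f^a=\prod_n(1-t^n)^{-ag_n}=\Exp\rbr{\sum_n ag_nt^n}=\Exp(a\Log f)$. The organizing principle is that $\Log$ carries the multiplicative group $(1+tR\pser t,*)$, together with its plethystic structure, to the additive module $tR\pser t$, where raising to the power $a$ becomes the scalar multiplication $g\mto ag$; under this linearization the axioms $f^0=1$, $f^1=f$, $f^{a+b}=f^af^b$, $(fg)^a=f^ag^a$, and $f^{ab}=(f^a)^b$ reduce to the facts that $\Exp$ is a group isomorphism and $\Log$ its inverse, both already recorded in the excerpt.

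To confirm that the two constructions are mutually inverse I would use $\Exp(-t)=\Exp(t)\inv=1-t$, equivalently $\Log(1-t)=-t$, and its consequence $\Log(1-t^n)=-t^n$ obtained by the substitution $t\mto t^n$. Starting from a plethystic exponential, setting $f^a=\Exp(a\Log f)$, and forming $\prod_n(1-t^n)^{-g_n}$, one finds $(1-t^n)^{-g_n}=\Exp\rbr{-g_n\Log(1-t^n)}=\Exp(g_nt^n)$, so that $\prod_n(1-t^n)^{-g_n}=\prod_n\Exp(g_nt^n)=\Exp\rbr{\sum_n g_nt^n}$ recovers the original $\Exp$; the opposite composite is exactly the computation of the previous paragraph.

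The main obstacle is the bookkeeping required to check that each construction lands in the right class of objects and is compatible with the substitution and continuity axioms. In particular, matching the power-structure axiom $f(t^n)^a=f(t)^a|_{t\mto t^n}$ with the plethystic axiom $\Exp(f(t^n))=\Exp(f)|_{t\mto t^n}$ is where the product decomposition of $\Exp$ and the identity $\Log(1-t^n)=-t^n$ are really used, and the continuity axioms are what let one pass the finite-product identity $(fg)^a=f^ag^a$ to the infinite products $\prod_n(1-t^n)^{-f_n}$. Once these compatibilities are secured, the bijection and both displayed formulas follow, and the remaining verifications are the routine ones indicated above.
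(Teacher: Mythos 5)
Your overall strategy---composing the two bijections stated in the remarks (power structures $\leftrightarrow$ normalized pre-\la-rings $\leftrightarrow$ plethystic exponentials)---is not the paper's route, but the computational core is identical: your derivation $f^a=\prod_n(1-t^n)^{-ag_n}=\Exp(a\Log f)$ from axioms \ref{mult1}, \ref{mult2}, \ref{PS:cont}, and the reverse computation $\prod_n(1-t^n)^{-g_n}=\prod_n\Exp(g_nt^n)=\Exp\rbr{\sum_n g_nt^n}$ via $\Log(1-t^n)=-t^n$, are exactly the two displayed calculations in the paper's proof. The difference is that the paper never routes through pre-\la-rings: it builds $\Exp$ directly from a power structure (checking that the infinite product is well defined because $(1-t^n)^a=1-at^n+O(t^{n+1})$, the same unproved fact your route needs to know that $\si_t(a)=(1-t)^{-a}$ is a pre-\la-ring), and in the converse direction it defines $f^a:=\Exp(a\Log f)$ and verifies the power-structure axioms from scratch. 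What your factorization buys is brevity; what it costs is that the proposition becomes only as strong as the two remarks, which the paper asserts but does not prove (the first is supported only by the citation \cite{gusein-zade_power}, the second not at all). A self-contained write-up along your lines would still have to prove their converse halves, and that is precisely where the content of the proposition lives.

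Concretely, the one step you should not file under routine bookkeeping is the continuity axiom \ref{PS:cont} for the power structure built from an exponential. For $f^a:=\Exp(a\Log f)$ to be continuous, one needs $\Log$ itself to be continuous: the $n$-jet of $\Log f$ must be determined by the $n$-jet of $f$ (in your remark-based language, the components $f_i$ with $i\le n$ in the factorization $f=\prod_i(1-t^i)^{-f_i}$ depend only on the $n$-jet of $f$). This is the only genuinely non-formal verification in the paper's proof: it writes any $g\in1+tR\pser t$ as $g=\rbr{\sum_{i=0}^n g_it^i}h$ with $h=1+O(t^{n+1})$, and uses additivity of $\Log$ together with the jet computation $\Log(1+at^m+O(t^{m+1}))=at^m+O(t^{m+1})$ to conclude $\Log(g)\equiv\Log\rbr{\sum_{i=0}^n g_it^i}\pmod{t^{n+1}}$. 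Your closing paragraph invokes continuity only in the opposite direction (using continuity of a \emph{given} power structure to pass $(fg)^a=f^ag^a$ to infinite products), so this point is nowhere covered by your computations. Supply that argument---or, equivalently, an actual proof of the converse direction of the first remark---and your proof is complete.
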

\begin{proof}
Consider a power structure on $R$.
One can show that $(1-t)^a=1-at+O(t^2)$,
hence $(1-t^n)^a=1-at^n+O(t^{n+1})$.
This implies that $\Exp$ given by the first formula is well-defined.
The axioms of an exponential follow from the axioms of a power structure.
Let $\Log$ denote the inverse of the map $\Exp$.
Given $f\in 1+tR\pser t$, let 
$\Log(f)=g=\sum_{n\ge1}g_nt^n$.
Then $f=\Exp(g)=\prod_{n\ge1}(1-t^n)^{-g_n}$ and by axioms \ref{mult1}, \ref{mult2}, \ref{PS:cont} we obtain
$$f^a
=\rbr{\prod_{n\ge1}(1-t^n)^{-g_n}}^{a}
=\prod_{n\ge1}(1-t^n)^{-g_n a}
=\Exp\rbr{\sum_{n\ge1} ag_nt^n}
=\Exp(ag).
$$
Conversely, given an exponential map $\Exp$, let $\Log$ be its inverse and let $f^a=\Exp(a\Log(f))$ for $f\in 1+tR\pser t$ and $a\in R$.
By continuity, for every $k\ge0$, there exists $n\ge k$ such that
$$\Exp\rbr{\sum_{i\ge1}f_it^i}
\equiv\Exp\rbr{\sum_{i=1}^n f_it^i}
=\prod_{i=1}^n\Exp(f_it^i)
\equiv\prod_{i=1}^k\Exp(f_it^i) \pmod{t^{k+1}}
,$$
hence we can take $n=k$.
Therefore
$$\Exp(at^n+O(t^{n+1}))\equiv \Exp(at^n)\equiv 1+at^n\pmod{t^{n+1}}.$$

Assume that $\Log(1+at^n+O(t^{n+1}))=bt^m+O(t^{m+1})$ for some $a,b\ne0$.
Then 
$$1+at^n+O(t^{n+1})=\Exp(bt^m+O(t^{m+1}))\equiv
1+bt^m \pmod{t^{m+1}}.$$
Therefore $m=n$ and $b=a$.
Every $g=\sum_{i\ge0}g_it^i\in1+tR\pser t$ can be written in the form $g=\rbr{\sum_{i=0}^n g_it^i}h$, where $h=1+O(t^{n+1})$.
Then 
$$\Log(g)=\Log\rbr{\sum_{i=0}^n g_it^i}+\Log(h)\equiv
\Log\rbr{\sum_{i=0}^n g_it^i} \pmod{t^{n+1}},$$
hence $\Log$ is continuous.
Therefore the power structure is continuous.
All other axioms of the power structure are easily verified.
Finally, we have $\Exp(t^n)=(1-t^n)\inv$, hence $\Log(1-t^n)=-t^n$ and, for any $f=\sum_{n\ge1}f_nt^n$, we obtain
$$\Exp(f)
=\prod_{n\ge1}\Exp(f_nt^n)
=\prod_{n\ge1}\Exp(-f_n\Log(1-t^n))
=\prod_{n\ge1}(1-t^n)^{-f_n}.
$$
\end{proof}

\begin{remark}
The plethystic exponential corresponding to the standard pre-\la-ring structure on $K(\Var)$ is given by
$$\Exp([X]t)=(1-t)^{-[X]}=\sum_{k\ge0}[S^kX]t^k$$
for any algebraic variety $X$
\end{remark}

\section{Nakajima quiver varieties}

\sec[Quiver varieties]
Let $Q=(Q_0,Q_1,s,t)$ be a finite quiver and let $kQ$ be its path algebra over a field $k$.
We define a $Q$-representation $M$ to be a pair $((M_i)_{i\in Q_0},(M_a)_{a\in Q_1})$, where $M_i$ is a vector space, for every $i\in Q_0$, and $M_a:M_i\to M_j$ is a linear map, for every arrow $a:i\to j$ in $Q$. 
We always assume that $\sum_{i\in Q_0}\dim M_i<\infty$ and identify $Q$-representations with finite-dimensional (left) $kQ$-modules.
For any path $u=a_n\dots a_1$, define $u|M=M_u=M_{a_n}\dots M_{a_1}$ considered as an endomorphism of the vector space $M=\bop_{i\in Q_0}M_i$.
Similarly, we have an endomorphism $u|M:M\to M$, for any element $u\in kQ$.

Let $A=kQ/I$, where $I\sbs kQ$ is an ideal contained in the ideal $J\sbs kQ$ generated by all arrows of $Q$.
The category $\mmod A$ of finite-dimensional, left $A$-modules can be identified with the category of $Q$-representations $M$ such that $u|M=0$, for all $u\in I$.
For any $M\in\mmod A$, we define its dimension vector 
$\udim M=(\dim M_i)_{i\in Q_0}\in\bN^{Q_0}$.
Given a vector $\te\in\bR^{Q_0}$, called a stability parameter, define the slope function
$$\mu_\te:\bN^{Q_0}\ms\set0\to\bR,\qquad \bv\mto\frac{\sum_i\te_i v_i}{\sum_i v_i},$$
and define $\mu_\te(M)=\mu_\te(\udim M)$, for $0\ne M\in\mmod A$.
An $A$-module $M$ is called $\te$-semistable (resp.\ $\te$-stable) if for any submodule $0\ne N\subsetneq M$, we have $\mu_\te(N)\le\mu_\te(M)$ (resp.\ $\mu_\te(N)<\mu_\te(M)$).

Let $V$  be a $Q_0$-graded vector space having dimension vector $\bv\in\bN^{Q_0}$.
The representation space $\cR(Q,\bv)=\bop_{a:i\to j}\Hom(V_i,V_j)$ is equipped with an action of the group $\GL_\bv=\prod_{i\in Q_0}\GL_{v_i}$ given by $(g\cdot M)_a=g_j M_a g_i\inv$, for $a:i\to j$ in $Q$.
For $A=kQ/I$, define
$$\cR(A,\bv)\sbs\cR(Q,\bv)$$
to be the closed subvariety consisting of representations that vanish on $I$.
There exists an open subvariety $\cR_\te(A,\bv)\sbs\cR(A,\bv)$ consisting of \te-semistable representations and an open subvariety $\cR_\te^\st(A,\bv)\sbs\cR_\te(A,\bv)$ consisting of \te-stable representations.
It is proved in \cite{king_moduli} that there exists a pre-projective categorical quotient
$$\cM_\te(A,\bv)=\cR_\te(A,\bv)\GIT\GL_\bv$$
that parametrizes $S$-equivalence classes of $\te$-semistable $A$-modules (in the category of $\te$-semistable $A$-modules having slope $\mu_\te(\bv)$).
There also exists a geometric quotient
$$\cM_\te^\st(A,\bv)=\cR_\te^\st(A,\bv)\qt\GL_\bv$$
which is open in $\cM_\te(A,\bv)$.
For the trivial stability $\te=0$, the moduli space
$$\cM_0(A,\bv)=\cR(A,\bv)\GIT\GL_\bv=\Spec k[\cR(A,\bv)]^{\GL_\bv}$$
parametrizes semi-simple $A$-modules having dimension vector $\bv$.
It is proved in \cite{king_moduli} that there exists a canonical projective morphism
$\pi:\cM_\te(A,\bv)\to\cM_0(A,\bv)$.
The fiber 
$$\cL_\te(A,\bv)=\pi\inv(0)$$
is projective
and parametrizes $S$-equivalence classes of nilpotent $\te$-semistable $A$-modules (here $M\in\mmod A$ is called nilpotent if $J^nM=0$ for some $n\ge1$).

\medskip
Let $\bd:Q_1\to\bZ$, $a\mto d_a$, be a map such that the ideal $I\sbs kQ$ is homogeneous with respect to the $\bZ$-grading on $kQ$ induced by $\bd$.
Then we can define the action of $T=\Gm$ on $\cM_\te(A,\bv)$
$$t\cdot M=(t^{d_a}M_a)_{a\in Q_1},\qquad t\in T,\, M\in\cM_\te(A,\bv).$$

\begin{proposition}[see \eg \cite{mozgovoy_translation}]
\label{prop:attractors}
Assume that $d_a>0$, for all $a\in Q_1$.
Then the action of $T$ on $\cM=\cM_\te(A,\bv)$ satisfies
\begin{enumerate}
\item 
$\cM^T$ is projective.
\item
$\cM^+=\sets{M\in\cM}{\exists\lim_{t\to0}tM}=\cM_\te(A,\bv)$.
\item
$\cM^-=\sets{M\in\cM}{\exists\lim_{t\to\infty}tM}=\cL_\te(A,\bv)$.
\end{enumerate}
\end{proposition}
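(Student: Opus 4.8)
The plan is to deduce all three statements from two structural facts: that the affine quotient $\cM_0(A,\bv)$ is a \emph{contracting cone} for the $T$-action, and that the canonical morphism $\pi\colon\cM_\te(A,\bv)\to\cM_0(A,\bv)$ is $T$-equivariant and proper (it is projective by King's result). Throughout I abbreviate $\cM=\cM_\te(A,\bv)$ and $\cL=\cL_\te(A,\bv)=\pi\inv(0)$.

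First I would record the structure of $\cM_0=\Spec R$, where $R=k[\cR(A,\bv)]^{\GL_\bv}$. Since every matrix entry of $M_a$ has $\bd$-degree $d_a>0$ and $I$ is homogeneous, the coordinate ring $k[\cR(A,\bv)]$ is non-negatively graded with degree-zero part $k$; as the $T$-action commutes with the conjugation action of $\GL_\bv$, this grading passes to invariants, so $R=\bop_{n\ge0}R_n$ with $R_0=k$. Consequently the GIT quotient map $p\colon\cR(A,\bv)\to\cM_0$ is $T$-equivariant, and because $\lim_{t\to0}t\cdot M=0$ in $\cR(A,\bv)$ for every $M$ (all arrows are scaled by $t^{d_a}\to0$), applying $p$ gives $\lim_{t\to0}t\cdot x=0$ for every $x\in\cM_0$, where $0:=p(0)$ is the unique $T$-fixed point. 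Conversely, if $\lim_{t\to\infty}t\cdot x$ existed for some $x\ne0$, then the orbit map would extend to a morphism $\bP^1\to\cM_0$ from a complete variety into an affine one, forcing it to be constant and hence $x$ to be fixed; so $\lim_{t\to\infty}t\cdot x$ exists if and only if $x=0$.

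With this in hand I would argue as follows. For (1), $T$-equivariance of $\pi$ gives $\cM^T\sbs\pi\inv(\cM_0^T)=\pi\inv(0)=\cL$, and $\cM^T$ is closed in the projective variety $\cL$, hence projective. For (2), given $M\in\cM$, the image orbit $t\mto t\cdot\pi(M)$ extends over $t=0$ by the contraction property; feeding the orbit $t\mto t\cdot M$ as a $K$-point (over the fraction field $K$ of $\cO=k[t]_{(t)}$) together with this extension as the $\cO$-point downstairs into the valuative criterion of properness for $\pi$ produces a lift $\Spec\cO\to\cM$, whose special fibre is $\lim_{t\to0}t\cdot M$; thus the limit exists in $\cM$ and $\cM^+=\cM$. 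For (3), if $\pi(M)=0$ then $M\in\cL$, which is projective and $T$-stable, so both one-parameter limits exist and $M\in\cM^-$; conversely if $\lim_{t\to\infty}t\cdot M$ exists then so does $\lim_{t\to\infty}t\cdot\pi(M)$, forcing $\pi(M)=0$ by the cone computation above, whence $M\in\cL$. Therefore $\cM^-=\cL$.

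I expect the main obstacle to be the two inputs that feed the \BB-type formalism rather than the formalism itself: establishing the contracting-cone structure of $\cM_0$ (the grading argument, where $R_0=k$ is the delicate point), and verifying that $\pi$ intertwines the two $T$-actions. The latter is exactly where homogeneity of $I$ is essential, since the scaling action is a priori only defined on the representation spaces and must be shown to descend compatibly through both GIT quotients. A minor point to dispatch first is that $T$ genuinely preserves $\te$-semistability—true because scaling the arrows by a nonzero $t$ alters neither the lattice of subrepresentations nor their dimension vectors, so $\mu_\te$ is unchanged—which is what makes the action on $\cM$ well defined to begin with. Once $\pi$ is a proper $T$-equivariant morphism onto the contracting cone $\cM_0$, all three assertions follow formally as above.
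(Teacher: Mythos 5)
Your proof is correct, and there is nothing in the paper to measure it against line by line: the paper does not prove Proposition \ref{prop:attractors} at all, it quotes it from \cite{mozgovoy_translation}. Your reconstruction is, in fact, the standard argument used there and elsewhere (e.g.\ in Nakajima's lectures and in the theory of semiprojective varieties): the two inputs you isolate are exactly the right ones. Since all $d_a>0$ and $I$ is $\bd$-homogeneous, $k[\cR(A,\bv)]^{\GL_\bv}$ is non-negatively graded with degree-zero part $k$, so $\cM_0(A,\bv)$ is a cone whose unique $T$-fixed point is the vertex, every point contracts to it as $t\to0$, and no point other than the vertex has a limit as $t\to\infty$ (constancy of maps $\bP^1\to\cM_0$); combining this with $T$-equivariance and projectivity of $\pi:\cM_\te(A,\bv)\to\cM_0(A,\bv)$ yields (1) because $\cM^T\sbs\pi\inv(0)=\cL_\te(A,\bv)$ is closed in a projective variety, (2) by lifting the limit downstairs through the valuative criterion, and (3) by equivariance together with projectivity of the central fiber, exactly as you argue. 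The only step you compress is the passage from the valuative-criterion lift over $\Spec k[t]_{(t)}$ to a genuine extension of the orbit map over $\bA^1$ (spreading out plus separatedness of $\cM$); this is standard and does not affect correctness.
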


This result implies (see \eg \cite{mozgovoy_translation})

\begin{proposition}
\label{L from M}
Assume that $\cM_\te(A,\bv)$ is  smooth and $d_a>0$, for all $a\in Q_1$. Then
$$[\cL_\te(A,\bv)]\dual=\bL^{-\dim\cM_\te(A,\bv)}[\cM_\te(A,\bv)].$$
\end{proposition}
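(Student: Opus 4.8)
The plan is to apply the Białynicki–Birula decomposition associated with the $T=\Gm$-action, using Proposition~\ref{prop:attractors} as the crucial geometric input. Write $\cM=\cM_\te(A,\bv)$ and let $\cM^T=\bigsqcup_\al F_\al$ be the decomposition of the fixed locus into its connected components. Since $\cM$ is smooth and $\cM^T$ is projective by Proposition~\ref{prop:attractors}(1), each $F_\al$ is smooth, projective and connected, and the tangent space along $F_\al$ splits under the $T$-action into weight-zero, positive-weight and negative-weight parts of dimensions $\dim F_\al$, $d_\al^+$ and $d_\al^-$ respectively, so that
$$\dim\cM=\dim F_\al+d_\al^++d_\al^-.$$

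First I would set up the two attractor stratifications. The attracting strata $\cM_\al^+=\sets{M}{\lim_{t\to0}tM\in F_\al}$ fibre over $F_\al$ as Zariski-locally trivial affine bundles of rank $d_\al^+$, and likewise the repelling strata $\cM_\al^-=\sets{M}{\lim_{t\to\infty}tM\in F_\al}$ are affine bundles of rank $d_\al^-$ over $F_\al$. Since an affine bundle contributes $\bL^{\mathrm{rank}}$ times the class of its base, the scissor relations combined with Proposition~\ref{prop:attractors}(2),(3), which identify $\bigsqcup_\al\cM_\al^+=\cM^+=\cM$ and $\bigsqcup_\al\cM_\al^-=\cM^-=\cL_\te(A,\bv)$, yield
$$[\cM]=\sum_\al\bL^{d_\al^+}[F_\al],\qquad [\cL_\te(A,\bv)]=\sum_\al\bL^{d_\al^-}[F_\al].$$

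Next I would apply the duality involution of §\ref{mot classes}, which is a ring homomorphism with $(\bL^n)\dual=\bL^{-n}$ and, by smoothness and projectivity of the connected components, $[F_\al]\dual=\bL^{-\dim F_\al}[F_\al]$. Applying it termwise to the expression for $[\cL_\te(A,\bv)]$ and invoking the dimension identity above gives
$$[\cL_\te(A,\bv)]\dual=\sum_\al\bL^{-d_\al^--\dim F_\al}[F_\al]=\sum_\al\bL^{d_\al^+-\dim\cM}[F_\al]=\bL^{-\dim\cM}[\cM],$$
which is the claimed formula.

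The main obstacle is justifying the affine-bundle structure of the strata, since the classical Białynicki–Birula theorem is stated for smooth \emph{projective} varieties whereas $\cM$ is only quasi-projective. The point to verify carefully is that $\cM$ is \emph{semiprojective} — smooth, with projective fixed locus, and with every point flowing into $\cM^T$ as $t\to0$ (exactly Proposition~\ref{prop:attractors}(2)) — since in this situation both the attracting and the repelling decompositions exist and consist of Zariski-locally trivial affine bundles over the $F_\al$, so that the two motivic identities above are valid. All remaining steps are bookkeeping with the dimension identity and the duality relations recorded in §\ref{mot classes}.
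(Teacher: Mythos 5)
Your proof is correct and takes essentially the route the paper intends: the paper derives this proposition from Proposition~\ref{prop:attractors} by citing \cite{mozgovoy_translation}, and the argument there is precisely your \BB-plus-duality computation, with the two attractor identities $[\cM]=\sum_\al\bL^{d_\al^+}[F_\al]$, $[\cL_\te(A,\bv)]=\sum_\al\bL^{d_\al^-}[F_\al]$ combined termwise via $[F_\al]\dual=\bL^{-\dim F_\al}[F_\al]$. You also correctly isolate the one genuinely delicate point, namely that semiprojectivity (smoothness, projective fixed locus, existence of all $t\to0$ limits, i.e.\ parts (1) and (2) of Proposition~\ref{prop:attractors}) is what guarantees the Zariski-locally trivial affine-bundle structure of both the attracting and repelling strata in the quasi-projective setting.
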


\subsection{Nakajima quiver varieties}
Let $Q$ be a finite quiver and $\bw\in\bN^{Q_0}$ be a vector.
Define the framed quiver $Q^\f$ by adding to $Q$ one new vertex $*$ as well as $w_i$ arrows $*\to i$, for every $i\in Q_0$.
Define the double quiver $\bar Q^\f$ of the quiver $Q^\f$ by adding to $Q^\f$ an arrow $a^*:j\to i$, for every arrow $a:i\to j$ in $Q^\f$.
Define the pre-projective algebra
$$\Pi=k\bar Q^\f/(\fr),\qquad \fr=\sum_{(a:i\to j)\in Q^\f_1}(aa^*-a^*a).$$
Given $\bv\in\bN^{Q_0}$, we extend it to $\bv^\f\in\bN^{Q^\f_0}$ by setting $v^\f_*=1$.
Define a stability parameter $\te^\f\in\bR^{Q^\f_0}$ by setting $\te_*=1$ and $\te_i=0$ for $i\in Q_0$.
Define Nakajima quiver varieties \cite{nakajima_instantons}
$$\cM(\bv,\bw)=\cM_{\te^\f}(\Pi,\bv^\f),\qquad
\cL(\bv,\bw)=\cL_{\te^\f}(\Pi,\bv^\f).$$
Variety $\cM(\bv,\bw)$ is smooth and has dimension \cite{nakajima_instantons}
$$\dim \cM(\bv,\bw)=2(\bv\cdot\bw-\hi(\bv,\bv)),$$
where $\hi$ is the Euler-Ringel form of $Q$ defined by
$$\hi(\bv,\bw)=\sum_{i\in Q_0}v_iw_i-\sum_{(a\rcol i\to j)\in Q_1}v_iw_j,\qquad \bv,\bw\in\bZ^{Q_0}.$$
Actually, $\cM(\bv,\bw)$ is a symplectic manifold and $\cL(\bv,\bw)$ is its projective subvariety (Lagrangian if there are no loops in $Q$) homotopic to $\cM(\bv,\bw)$,
see \eg \cite{nakajima_instantons,ginzburg_lecturesa}.
Both varieties are pure.

\smallskip
It follows from the results of \cite{nakajima_quivere} (see also \cite{mozgovoy_translation})
that motivic classes of quiver varieties $\cM(\bv,\bw)$ and $\cL(\bv,\bw)$ are polynomials in $\bL$.
They are related by (see Proposition \ref{L from M})
\begin{equation}
\label{dual1}
[\cL(\bv,\bw)]\dual=\bL^{-\dim\cM(\bv,\bw)}[\cM(\bv,\bw)].
\end{equation}
There is an explicit formula for the motivic classes of $\cM(\bv,\bw)$ \cite{hausel_kac,mozgovoy_fermionic,wyss_motivic,bozec_number}
\begin{equation}\label{class1}
\sum_{\bv\in\bN^{Q_0}}\bL^{-\oh\dim\cM(\bv,\bw)}[\cM(\bv,\bw)]z^\bv
=\frac{\br(\bw,\bL,z)}{\br(0,\bL,z)},
\end{equation}
\begin{equation}
\br(\bw,q\inv,z)
=\sum_\ta q^{-\bw\cdot\ta_1}\prod_{k\ge1}q^{\hi(\ta_k,\ta_k)}\frac{z^{\ta_k}}{(q;q)_{\ta_k-\ta_{k+1}}},
\end{equation}
where
\begin{enumerate}
\item $\ta=(\ta^i)_{i\in Q_0}$ is a collection of partitions,
\item
$\ta_k=(\ta_k^i)_{i\in Q_0}\in\bN^{Q_0}$ for $k\ge1$,
\item
$z^\bv=\prod_{i\in Q_0}z_i^{v_i}$ for $\bv\in\bN^{Q_0}$,
\item $(t;q)_\bv=\prod_{i\in Q_0}(t;q)_{v_i}$, $(t;q)_n=\prod_{k=0}^{n-1}(1-tq^k)$ for $\bv\in\bN^{Q_0}$
and $n\in\bN$.
\end{enumerate}

\sec[Motivic classes for the Jordan quiver]
Consider the Jordan quiver $C^1$ which has one vertex and one loop.
Let $\cM(n,r)$ and $\cL(n,r)$ be the corresponding Nakajima quiver varieties ($r$ is the dimension of the framing).
The quiver variety $\cM(n,r)$ is smooth,
the quiver variety $\cL(n,r)$ is projective,
and their dimensions are (see below)
$$\dim\cM(n,r)=2rn,\qquad \cL(n,r)=rn-1.$$ 
According to \cite[\S3]{nakajima_lecturesa} there exists an action of $T=\Gm$ on $\cM=\cM(n,r)$ such that the fixed locus $\cM^T$ is finite and the attractors are 
$$\cM^+=\sets{M\in \cM}{\exists\lim_{t\to0}tM}=\cM(n,r),\qquad
\cM^-=\sets{M\in \cM}{\exists\lim_{t\to\infty}tM}=\cL(n,r).$$
By the \BB decomposition \cite{bialynicki-birula_some}, this implies that both varieties
have cellular decompositions.

\begin{theorem}
\label{motive of L,M}
We have
$$\sum_{n\ge0} [\cL(n,r)]t^n
=\prod_{i=1}^r\prod_{j\ge1}\frac1{1-\bL^{rj-i}t^j}
=\Exp\rbr{\frac{[\bP^{r-1}]t}{1-\bL^rt}},$$
$$\sum_{n\ge0} [\cM(n,r)]t^n
=\Exp\rbr{\frac{[\bP^{r-1}]\bL^{r+1}t}{1-\bL^rt}}.$$
\end{theorem}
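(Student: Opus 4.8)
The plan is to extract $[\cM(n,r)]$ from the explicit formula \eqref{class1} specialized to the Jordan quiver, and then to deduce $[\cL(n,r)]$ via the duality \eqref{dual1}. The decisive simplification is that for $C^1$ the Euler--Ringel form vanishes, $\hi\equiv0$, so the factor $q^{\hi(\ta_k,\ta_k)}$ disappears; moreover $Q_0$ is a single vertex, so a configuration $\ta$ is a single partition $\la=(\la_1\ge\la_2\ge\cdots)$, the framing is $\bw=r$, and $\bw\cdot\ta_1=r\la_1$. Writing $q=\bL\inv$ (so that $q\inv=\bL$), the series $\br$ becomes
$$\br(r,\bL,z)=\sum_\la\bL^{r\la_1}\prod_{k\ge1}\frac{z^{\la_k}}{(\bL\inv;\bL\inv)_{\la_k-\la_{k+1}}}.$$

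First I would reparametrize each partition by its consecutive differences $m_k=\la_k-\la_{k+1}\ge0$, using $\la_1=\sum_k m_k$ and $\sum_k\la_k=\sum_k k\,m_k$. This rewrites the summand as $\prod_k (\bL^r z^k)^{m_k}/(\bL\inv;\bL\inv)_{m_k}$, so the sum over $\la$ factors over $k$:
$$\br(r,\bL,z)=\prod_{k\ge1}\ \sum_{m\ge0}\frac{(\bL^r z^k)^m}{(\bL\inv;\bL\inv)_m}.$$
Euler's $q$-exponential identity $\sum_{m\ge0}x^m/(q;q)_m=\prod_{j\ge0}(1-xq^j)\inv$, applied with $q=\bL\inv$, then gives $\br(r,\bL,z)=\prod_{k\ge1}\prod_{j\ge0}(1-\bL^{r-j}z^k)\inv$. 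In the ratio $\br(r,\bL,z)/\br(0,\bL,z)$ the two infinite products over $j$ cancel except for the $r$ factors carrying exponents $\bL^1,\dots,\bL^r$, so the ratio collapses to the finite product $\prod_{k\ge1}\prod_{i=1}^r(1-\bL^i z^k)\inv$. By \eqref{class1} this equals $\sum_{n\ge0}\bL^{-nr}[\cM(n,r)]z^n$, since $\dim\cM(n,r)=2(rn-\hi(\bv,\bv))=2rn$.

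It remains to recognize the generating series. Substituting $z=\bL^r t$ turns the left side into $\sum_n[\cM(n,r)]t^n$ and the product into $\prod_{k\ge1}\prod_{i=1}^r(1-\bL^{rk+i}t^k)\inv$; since $\Exp(\bL^a t^k)=(1-\bL^a t^k)\inv$ (from $\Exp([X]t)=\sum_m[S^mX]t^m$ with $[S^m\bA^a]=\bL^{am}$, together with axiom \ref{E:power}) and $\Exp$ is multiplicative, this is exactly $\Exp\rbr{[\bP^{r-1}]\bL^{r+1}t/(1-\bL^rt)}$, because $[\bP^{r-1}]\bL^{r+1}t/(1-\bL^r t)=\sum_{k\ge1}\sum_{i=1}^r\bL^{rk+i}t^k$. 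For $\cL(n,r)$ I would apply the involution $\dual$ to \eqref{dual1}, obtaining $[\cL(n,r)]=\bL^{2nr}[\cM(n,r)]\dual$; as all classes are polynomials in $\bL$ and $\dual$ is the ring homomorphism $\bL\mapsto\bL\inv$, the series $\sum_n[\cL(n,r)]t^n$ is obtained from $\sum_n\bL^{-nr}[\cM(n,r)]z^n$ by the substitution $\bL\mapsto\bL\inv$ followed by $z\mapsto\bL^r t$. This sends each exponent $\bL^i$ to $\bL^{rk-i}$, producing $\prod_{k\ge1}\prod_{i=1}^r(1-\bL^{rk-i}t^k)\inv$, which equals $\Exp\rbr{[\bP^{r-1}]t/(1-\bL^r t)}$ by the same recognition step.

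The main obstacle is the evaluation of $\br$: factoring the partition sum through the difference variables $m_k$ and invoking the correct $q$-series identity, and then tracking all powers of $\bL$ (and the convention $q=\bL\inv$) through the ratio and the two substitutions without shift errors. A secondary point is that $\br(r,\bL,z)$ and $\br(0,\bL,z)$ live a priori only in a completion of $K'(\Var)$ along $\bL\inv$; one should verify that their ratio and the final answers again have coefficients that are honest polynomials in $\bL$, consistent with the cellular decompositions, which legitimizes the formal manipulations.
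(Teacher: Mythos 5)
Your proposal is correct and follows essentially the same route as the paper's second proof of this theorem: specialize the explicit formula \eqref{class1} to the Jordan quiver (where $\hi\equiv0$), factor the partition sum through the consecutive differences $m_k=\la_k-\la_{k+1}$, evaluate via the $q$-exponential (Heine) identity, extract $[\cM(n,r)]$ after the substitution $z=\bL^r t$, and deduce the $[\cL(n,r)]$ series from the duality \eqref{dual1}. The only cosmetic difference is that you cancel the two infinite products in the ratio $\br(r,\bL,z)/\br(0,\bL,z)$ directly, whereas the paper carries out the same cancellation inside the plethystic exponential; your closing remark about working in the completion of $K'(\Var)$ along $\bL\inv$ makes explicit a point the paper leaves implicit.
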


\begin{proof}[First proof]
It is proved in \cite[Corollary 3.10]{nakajima_lecturesa} (by counting cells of a \BB decomposition) that the virtual Poincar\'e polynomials of $\cL(n,r)$ (which coincide with the usual Poincar\'e polynomials as $\cL(n,r)$ are pure and projective) satisfy
$$
\sum_n P(\cL(n,r),q)t^n
=\prod_{i=1}^r\prod_{j\ge1}\frac1{1-q^{rj-i}t^j}.
$$
As $\cL(n,r)$ admits a cellular decomposition, these polynomials also count motivic classes.
We have
$$
\prod_{i=1}^r\prod_{j\ge1}\frac1{1-\bL^{rj-i}t^j}
=\Exp\rbr{t\sum_{i=1}^r \bL^{r-i}\sum_{j\ge0}\bL^{rj}t^j}
=\Exp\rbr{\frac{\bL^r-1}{\bL-1}\frac{t}{1-\bL^rt}}.
$$
By the equation \eqref{dual1}, we have $[\cL(n,r)]\dual=\bL^{-2rn}[\cM(n,r)]$.
Therefore the first formula implies
$$\sum_{n\ge0} \bL^{-2rn}[\cM(n,r)]t^n
=\Exp\rbr{\frac{\bL^{-r}-1}{\bL\inv-1}\frac{t}{1-\bL^{-r}t}}
$$
and this is equivalent to the second formula.
\end{proof}

\begin{proof}[Second proof]
We will apply the general formula \eqref{class1} for the motivic classes of quiver varieties.
By the $q$-binomial theorem (Heine formula) we have
$$\sum_{n\ge0}\frac{t^n}{(q;q)_n}=\frac1{(t;q)_\infty}=\frac1{\prod_{k\ge0}(1-q^kt)}=\Exp\rbr{\frac t{1-q}},$$
where $(t;q)_n=\prod_{k=0}^{n-1}(1-q^kt)$.
For the quiver with one loop we have $\hi=0$, hence
\begin{multline*}
\br(r,q\inv,t)
=\sum_\ta q^{-r\ta_1}\prod_{k\ge1}\frac{t^{\ta_k}}{(q;q)_{\ta_k-\ta_{k+1}}}
=\sum_{m_1,m_2,\dots\ge0}
\prod_{k\ge1}\frac{q^{-rm_k}t^{km_k}}{(q;q)_{m_k}}\\
=\prod_{k\ge1}\rbr{\sum_{m\ge0}
\frac{(q^{-r}t^k)^m}{(q;q)_{m}}}
=\prod_{k\ge1}\Exp\rbr{\frac{q^{-r}t^k}{1-q}}
=\Exp\rbr{\frac{q^{-r}t}{(1-q)(1-t)}}
\end{multline*}
where we used
$m_k=\ta_{k}-\ta_{k+1}\ge0$ and $\ta_k=\sum_{i\ge k}m_i$ for $k\ge1$.
This implies
$$\sum_{n\ge0} \bL^{-rn}[\cM(n,r)]t^n
=\frac{\br(r,\bL,t)}{\br(0,\bL,t)}
=\Exp\rbr{\frac{\bL^{r}-1}{1-\bL\inv}\frac{t}{1-t}},
$$
hence the second formula of the theorem.
The first formula follows from the above argument.
\end{proof}

\begin{remark}
Note that the above formula for the motivic class of $\cL(n,r)$ implies that $\dim\cL(n,r)=rn-1$.
We will see later that $\cL(n,r)$ can be identified with the punctual scheme $\Quot(\cO^{\oplus r}_{\bA^2},n)_0$.
Its was proved in \cite{ellingsrud_irreducibility,baranovsky_punctual}
that this scheme is irreducible and has dimension $rn-1$.
\end{remark}

\sec[Relation to framed moduli spaces on \tpdf{$\bP^2$}{P2}]
Let $M(r,n)$ be the framed moduli space of torsion free sheaves on $\bP^2$ (see \eg 
\cite{nakajima_lectures,nakajima_lecturesa})
which parametrizes isomorphism classes of pairs $(E,\vi)$ such that
\begin{enumerate}
\item $E$ is a torsion free coherent sheaf on $\bP^2$, locally free in a \nbd of a line $\ell_\infty\sbs\bP^2$
ans satisfying $\rk E=r$, $c_2(E)=n$.
\item $\vi:E|_{\ell_\infty}\isoto \cO_{\ell_\infty}^{\oplus r}$ is an isomorphism, called framing.
\end{enumerate}

By a result of Barth \cite{barth_moduli} (see also \cite{nakajima_lectures})
this moduli space is isomorphic to the Nakajima quiver variety (note that we use the framing vector $\bw=r$ here)
$$M(r,n)\iso \cM(n,r)$$
for the quiver $C^1$ having one vertex and one loop.
There is a projective morphism
$$\pi:\cM(n,r)\to\cM_0(n,r)$$
which is an isomorphism over the moduli space $\cM_0^\s(n,r)$ of simple representations.
The preimage of $\cM_0^\s(n,r)$ corresponds to the moduli space $M_0^\reg(r,n)$ of framed locally free sheaves on~$\bP^2$ (identified by Donaldson \cite{donaldson_instantons} with the framed moduli space of instantons on~$S^4$).
Therefore we have (ADHM construction~ \cite{atiyah_construction})
$$M_0^\reg(r,n)\iso\cM_0^\s(n,r).$$

\section{Motivic classes of Quot-schemes}

\sec[Quot-schemes]
Let $E$ be a rank $r$ locally free sheaf over an algebraic variety $X$.
For any $n\ge0$, let $\Quot(E,n)$ denote the Grothendieck quotient scheme parametrizing epimorphisms $E\to F$, where $F$ is a zero-dimensional coherent sheaf with $\dim\Ga(X,F)=n$, modulo automorphisms of $F$.
For any point $x\in X$, let $\Quot(E,n)_x\sbs\Quot(E,n)$ denote the subscheme consisting of quotients $E\to F$ with $F$ supported in the point $x$.
This scheme depends only on a (formal) \nbd of $x\in X$.
If $X$ is smooth of dimension $d$, we have
$$\Quot(E,n)_x\iso\Quot(\cO_X^{\oplus r},n)_x\iso
\Quot(\cO_{\bA^d}^{\oplus r},n)_0.$$
The last scheme has a simple description as a nilpotent quiver variety.

\begin{proposition}
\label{quiv-descript}
Let $Q$ be a quiver with 
vertices $*$ and $1$, arrows $f_i:*\to 1$ for $1\le i\le r$, and loops $x_i:1\to 1$ for $1\le i\le d$
\def\temp{f_1,\dots,f_r}
\begin{ctikzcd}[execute at end picture={
		\draw[dotted](1.5,.9)to[out=0,in=90](2.2,.1);
		\draw[dotted](1.5,-.8)to[out=0,in=-90](2.2,-.1);
	}]
*\ar[rr,"\temp"]&&1
\ar[loop above,"x_1"]
\ar[loop below,"x_d"]
\ar[loop right,"x_i"]
\end{ctikzcd}
Let $A=kQ/(x_ix_j-x_jx_i)$, $\bv=(1,n)$,
$\te=(1,0)$
and let
$$\cM^d(n,r)=\cM_\te(A,\bv),\qquad
\cL^d(n,r)=\cL_\te(A,\bv)$$
be the corresponding quiver varieties.
Then 
$$\Quot(\cO^{\oplus r}_{\bA^d},n)\iso\cM^d(n,r),\qquad
\Quot(\cO_{\bA^d}^{\oplus r},n)_0\iso\cL^d(n,r).$$
\end{proposition}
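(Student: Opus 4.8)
The plan is to realise both sides as the quotient of a single space of framed ADHM-type data. First I would unwind the representation theory: a representation of $A=kQ/(x_ix_j-x_jx_i)$ with dimension vector $\bv=(1,n)$ is the datum of a line $V_*=k$, an $n$-dimensional space $V=V_1$, linear maps $B_1,\dots,B_r$ from $k$ to $V$ attached to the arrows $f_i$ (equivalently a single map $B:k^{\oplus r}\to V$), and endomorphisms $X_1,\dots,X_d\in\End(V)$ attached to the loops $x_i$, with the relations forcing the $X_i$ to commute. Writing $R=k[x_1,\dots,x_d]$, this is the same as a finite-dimensional $R$-module structure on $V$ (via $x_i\mapsto X_i$) together with a map $B:k^{\oplus r}\to V$, which in turn is the same as a morphism of sheaves $\cO_{\bA^d}^{\oplus r}\to F$ on $\bA^d$, where $F$ is the sheaf with $\Ga(\bA^d,F)=V$.

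Second, I would translate $\te$-stability with $\te=(1,0)$. Since $\dim V_*=1$, every subrepresentation $N\subsetneq M$ has $\dim N_*\in\set{0,1}$, and $\mu_\te(M)=\frac1{n+1}$. A subrepresentation with $\dim N_*=0$ has slope $0<\mu_\te(M)$, so it never destabilises. A subrepresentation with $\dim N_*=1$ contains the smallest subrepresentation of $M$ generated by $V_*$, whose component at vertex $1$ is the $R$-submodule of $V$ generated by $\im B$; such a subrepresentation is proper exactly when this $R$-submodule is proper, in which case its slope exceeds $\mu_\te(M)$. Hence $M$ is $\te$-semistable iff $\im B$ generates $V$ as an $R$-module, \ie iff the associated morphism $\cO_{\bA^d}^{\oplus r}\to F$ is an epimorphism. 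The same computation shows that a semistable $M$ has no proper subrepresentation containing $V_*$, so $\te$-semistability coincides with $\te$-stability; and if $g\in\GL(V)$ fixes a stable point then $g$ commutes with all $X_i$ and fixes $\im B$ pointwise, hence fixes the $R$-submodule it generates, namely all of $V$, so the stabiliser is trivial.

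Third, I would match the two quotients, and this is where I expect the main difficulty. In $\GL_\bv=\GL_1\xx\GL(V)$ the diagonal $\Gm$ acts trivially and $(\GL_1\xx\GL(V))/\Gm\iso\GL(V)$, so after using the $\GL_1$-action to rigidify $V_*$ the residual action is the standard one, $g\cdot(X_i,B)=(gX_ig\inv,gB)$. Two framed data define the same quotient of $\cO_{\bA^d}^{\oplus r}$ precisely when they lie in one $\GL(V)$-orbit, matching the equivalence \qq{modulo automorphisms of $F$} built into the \Quot-functor. The subtle point is to upgrade this bijection of orbits to an isomorphism of schemes. The cleanest route is a principal-bundle argument: by the previous paragraph $\cR_\te^\st(A,\bv)=\cR_\te(A,\bv)$ with trivial stabilisers, so by \cite{king_moduli} the variety $\cM^d(n,r)=\cR_\te^\st(A,\bv)\qt\GL(V)$ is a geometric quotient; the free $\GL(V)$-action makes $\cR_\te^\st(A,\bv)\to\cM^d(n,r)$ a principal $\GL_n$-bundle, and since $\GL_n$ is special this bundle is Zariski-locally trivial and descends a universal family. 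Hence $\cM^d(n,r)$ is a fine moduli space for epimorphisms $\cO_{\bA^d}^{\oplus r}\to F$ modulo isomorphism of $F$, which is exactly the functor represented by $\Quot(\cO^{\oplus r}_{\bA^d},n)$, and comparing the two universal families yields morphisms in both directions that are mutually inverse, giving the first isomorphism.

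Finally, for the punctual scheme I would use $\cL^d(n,r)=\cL_\te(A,\bv)=\pi\inv(0)$. Because $\te$-semistability equals $\te$-stability, every semistable module is its own $S$-equivalence class, and by the description of $\cL_\te(A,\bv)$ recalled above the fibre $\pi\inv(0)$ parametrises the nilpotent stable modules, \ie those with $J^mM=0$ for some $m$. Every path in $Q$ of length $\ge m$ starting at vertex $1$ is a monomial in the loops, and every such path starting at $*$ is a monomial in the loops composed with some $f_i$; hence $J^mM=0$ for large $m$ iff the commuting endomorphisms $X_1,\dots,X_d$ are simultaneously nilpotent, which is exactly the condition that $F$ be supported at the origin $0\in\bA^d$. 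Under the isomorphism just constructed this nilpotency locus is precisely $\Quot(\cO_{\bA^d}^{\oplus r},n)_0$, giving the second isomorphism.
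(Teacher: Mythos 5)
Your proposal is correct and follows essentially the same route as the paper: identify a representation of $A$ with dimension vector $(1,n)$ with an $R=k[x_1,\dots,x_d]$-module structure on $V_1$ plus a framing map $k^{\oplus r}\to V_1$, observe that $\te$-(semi)stability for $\te=(1,0)$ is exactly the condition that the framing generates the module (surjectivity of $\cO^{\oplus r}_{\bA^d}\to F$), and that nilpotency corresponds to $F$ being supported at the origin. The paper's proof is a sketch of precisely this correspondence (dismissing the converse and the scheme-theoretic identification as straightforward), whereas you additionally supply the explicit slope computation, the coincidence of stability and semistability with trivial stabilizers, and the principal-bundle/fine-moduli argument upgrading the bijection of orbits to an isomorphism of schemes --- all consistent with, and filling in, the paper's argument.
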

\begin{proof}
Given a representation $M\in \cL_\te(A,\bv)$, the vector space $M_1$ is equipped with a module structure over $R=k[x_1,\dots,x_d]$ such that $x_i$ act nilpotently.
This implies that the corresponding coherent sheaf over $\bA^d$ is supported at $0$.
On the other hand we have $r$ linear maps $M_*=k\to M_1$ which induce a module homomorphism $R^{\oplus r}\to M_1$.
Stability condition means that $M_*$ generates representation $M$, hence the homomorphism $R^{\oplus r}\to M_1$ is surjective and we obtain a point in $\Quot(\cO_{\bA^d}^{\oplus r},n)_0$.
The converse correspondence is straightforward.
The proof for $\cM^d(n,r)$ is the same.
\end{proof}

\begin{theorem}[see \cite{ricolfi_motive}]
\label{quot-quot0}
Let $E$ be a rank $r$ locally free sheaf over a smooth algebraic variety $X$ of dimension~$d$. Then
$$\sum_{n\ge0}[\Quot(E,n)]t^n
=\rbr{\sum_{n\ge0}[\Quot(\cO_{\bA^d}^{\oplus r},n)_0]t^n}^{[X]}.$$
\end{theorem}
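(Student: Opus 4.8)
The plan is to recognise the right-hand side via the geometric interpretation of the power structure on $K(\Var)$ due to Gusein-Zade, Luengo and Melle-Hernández \cite{gusein-zade_powerb}, and then to stratify $\Quot(E,n)$ by the support of its quotient sheaf so as to match that interpretation term by term. Recall the geometric formula: writing $f(t)=1+\sum_{n\ge1}[A_n]t^n$ with $A_n\in\Var$, the coefficient of $t^n$ in $f(t)^{[X]}$ is the motivic class of the space of finite configurations of pairwise distinct points of $X$, each decorated by a point of some $A_k$, with total weight $n$. Concretely it is a disjoint union, over all ways of writing $n=\sum_{k\ge1}k\,m_k$, of
$$\Big(\big(X^{\sum_k m_k}\ms\Delta\big)\xx\prod_{k\ge1}A_k^{m_k}\Big)\Big/\prod_{k\ge1}\fS_{m_k},$$
where $\Delta\sbs X^{\sum_k m_k}$ is the big diagonal (where some points coincide) and $\fS_{m_k}$ permutes the $m_k$ points labelled by $A_k$ together with their labels. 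I would apply this with $A_k=\Quot(\cO_{\bA^d}^{\oplus r},k)_0$, so that $f(t)$ is exactly the series on the right, with $A_0=\pt$.

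Next I would stratify $\Quot(E,n)$ by the support type of the quotient $F$. A zero-dimensional sheaf $F$ with $\dim\Ga(X,F)=n$ has finite support $\{x_1,\dots,x_m\}\sbs X$, and, writing $\ell_i$ for the length of $F$ at $x_i$, one has $\sum_i\ell_i=n$. The canonical decomposition $F=\bop_i F_{x_i}$ into local summands turns the surjection $E\to F$ into the collection of punctual surjections $E\to F_{x_i}$, each defining a point of $\Quot(E,\ell_i)_{x_i}$. Since $E$ is locally free, the excerpt gives $\Quot(E,\ell_i)_{x_i}\iso\Quot(\cO_{\bA^d}^{\oplus r},\ell_i)_0=A_{\ell_i}$ independently of the point $x_i$. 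Grouping the support points by their common lengths produces multiplicities $m_k=\#\{i:\ell_i=k\}$, and the stratum attached to $(m_k)_k$ maps to the configuration space $\big(X^{\sum_k m_k}\ms\Delta\big)/\prod_k\fS_{m_k}$ of distinct support points.

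Then I would show that over each such stratum $\Quot(E,n)$ is a piecewise-trivial fibration with fibre $\prod_k A_k^{m_k}$. Because $E$ is locally free, each point of $X$ has a neighbourhood on which $E\iso\cO_X^{\oplus r}$, and since $\Quot(E,k)_x$ depends only on the formal neighbourhood of $x$, distinct support points contribute independently; trivialising $E$ simultaneously on disjoint neighbourhoods of the finitely many support points of a nearby configuration yields local triviality of the fibration. Piecewise triviality is all that is needed for motivic classes, so the class of the stratum is $\big[(X^{\sum_k m_k}\ms\Delta)/\prod_k\fS_{m_k}\big]$ twisted by $\prod_k[A_k]^{m_k}$, with the symmetric groups identifying the interchangeable labelled fibres. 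Summing over all $(m_k)_k$ with $\sum_k k\,m_k=n$ reproduces the geometric power-structure formula above, which gives the claimed identity.

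The main obstacle is this last step: verifying that the support stratification is genuinely a piecewise-trivial product fibration and that its symmetric-group quotients coincide with those in the power-structure formula. The delicate point is the decoupling of the local contributions and the behaviour as support points collide between strata; this is precisely where local freeness of $E$ is essential, since it provides the simultaneous trivialisation $E\iso\cO_X^{\oplus r}$ near the support together with the reduction $\Quot(E,k)_x\iso\Quot(\cO_{\bA^d}^{\oplus r},k)_0$, and it is this geometric input that lifts the statement beyond a formal manipulation of power series.
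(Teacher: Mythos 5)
A preliminary remark on the comparison: the paper itself contains no proof of Theorem \ref{quot-quot0} --- it is quoted from \cite{ricolfi_motive} --- so your proposal can only be measured against that reference. Your overall strategy (the geometric description of the power structure of \cite{gusein-zade_powerb} together with stratification of $\Quot(E,n)$ by the multiplicity type of the support) is exactly the strategy of \cite{ricolfi_motive}, and of \cite{gusein-zade_power} in the case $r=1$, and your indexing of strata, fibres and group actions is correct. However, the step you yourself single out as the main obstacle is resolved incorrectly, and the error is genuine. You assert that each stratum is a piecewise-trivial fibration with fibre $\prod_kA_k^{m_k}$ over the unordered configuration space $(X^{\sum_km_k}\ms\Delta)/\prod_k\fS_{m_k}$. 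If that were true, the stratum would have class $\bigl[(X^{\sum_km_k}\ms\Delta)/\prod_k\fS_{m_k}\bigr]\cdot\prod_k[A_k]^{m_k}$, the \emph{plain} product of classes, whereas the power-structure coefficient you correctly quoted at the start is the class of the \emph{twisted} product $\bigl((X^{\sum_km_k}\ms\Delta)\times\prod_kA_k^{m_k}\bigr)/\prod_k\fS_{m_k}$; your phrase ``twisted by'' conflates the two, and they differ. A counterexample sits inside the theorem itself: take $r=1$, $X=\bA^2$, $n=4$, and the stratum of $\Hilb^4(\bA^2)$ of subschemes with two points of multiplicity two, so $m_2=2$ and $A_2=\Hilb^2(\bA^2)_0=\bP^1$. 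Since $T_{\bA^2}$ is trivial, the ordered stratum is $\rbr{\bA^2\times\bA^2\ms\Delta}\times(\bP^1)^2$ with $\fS_2$ acting diagonally, and the stratum is the corresponding twisted product. Splitting $(\bP^1)^2$ into $\fS_2$-stable pieces (the fixed point $(\infty,\infty)$; the two cells $\set{\infty}\times\bA^1\sqcup\bA^1\times\set{\infty}$, freely permuted; and $\bA^1\times\bA^1$ with the linear swap, which descends to a rank-two vector bundle over the quotient of the base) gives the stratum class $(\bL^4-\bL^2)(1+\bL+\bL^2)$, while the plain product is $(\bL^4-\bL^2)(1+\bL)^2$; the difference $(\bL^4-\bL^2)\bL$ is nonzero, and only the twisted value is compatible with G\"ottsche's formula $[\Hilb^4(\bA^2)]=\bL^8+\bL^7+2\bL^6+\bL^5$. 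So no piecewise trivialization over the unordered base exists, and your mechanism, if it worked, would prove a false identity.

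Two further points in your local argument also need repair, and repairing them is what leads to the actual proof. First, distinct points on an irreducible variety admit no disjoint Zariski neighbourhoods; one must instead work over opens of the \emph{ordered} configuration space of the form $(U_1\times\dots\times U_m)\ms\Delta$ with overlapping $U_i$, the fibrewise product decomposition coming from distinctness of the support points, not disjointness of neighbourhoods. Second, trivializing $E$ does not trivialize the fibration: what has to be trivialized is the \emph{family} of punctual Quot schemes $\bigsqcup_{y\in U}\Quot(E,k)_y\to U$ as $y$ varies, and the pointwise isomorphism $\Quot(E,k)_y\iso\Quot(\cO_{\bA^d}^{\oplus r},k)_0$ does not give this. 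Already for $r=1$, $k=2$ the sheaf $E=\cO_X$ is globally trivial, yet the family in question is the projectivized tangent bundle $\bP(T_X)\to X$, which is not constant; what trivializes it over $U$ is a choice of \etale coordinates $U\to\bA^d$ (this is where smoothness of $X$ enters), combined with translations of $\bA^d$ and a trivialization of $E|_U$. Granting such local trivializations, the essential remaining step, carried out in \cite{gusein-zade_power,ricolfi_motive}, is equivariant: partition $X$ into locally closed pieces over which these families are simultaneously constant, decompose each configuration stratum according to how many support points of each length lie in each piece, and note that over each such piece the identification with the corresponding piece of the twisted product can be chosen equivariantly for the subgroup preserving it; summing these equalities of classes yields the twisted coefficients and hence the theorem. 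Without this device (or an equivalent one) your final step is not merely unproved but, as the example shows, aimed at the wrong statement.
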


\sec[Quot-schemes over curves]
For $d=1$, the quiver $Q$ from Proposition \ref{quiv-descript} has the form
\def\temp{f_1,\dots,f_r}
\begin{ctikzcd}
*\ar[rr,"\temp"]&&1\ar[loop right,"x"]
\end{ctikzcd}
and has no relations.
The quiver variety $\cM^1(n,r)$ is smooth, has dimension $rn$ and admits a cellular decomposition (see \eg \cite{reineke_cohomology,engel_smooth}).

\begin{remark}
Let us show that $\cL^1(n,r)$ also admits a cellular decomposition.
There is a natural action of the torus $T'=\Gm^{Q_1}=\Gm^{r+1}$ on $\cM=\cM^1(n,r)$
such that the fixed locus $\cM^{T'}$ is finite.
We can find a torus $T=\Gm\sbs T'$ such that $\cM^{T}=\cM^{T'}$ and $T$ acts with a positive weight on every arrow.
The corresponding attractors are (see Proposition \ref{prop:attractors})
$$\cM^+=\sets{M\in\cM}{\exists\lim_{t\to0}tM}=\cM^1(n,r),
\qquad \cM^-=\sets{M\in\cM}{\exists\lim_{t\to\infty}tM}=\cL^1(n,r).$$
By the \BB decomposition \cite{bialynicki-birula_some}, this implies that both varieties have cellular decompositions.
\end{remark}

The virtual Poincar\'e polynomials of $\cM^1(n,r)$ (or equivalently, polynomials counting their points over finite fields) satisfy (see \eg \cite[\S5]{reineke_cohomology})
\begin{equation}\label{M1 PPoly}
\sum_{n\ge0}q^{-n}P(\cM^1(n,r),q)t^n
=\prod_{i=0}^{r-1}\frac1{1-q^it}
=\Exp\rbr{\frac{q^r-1}{q-1}t}.
\end{equation}

\begin{proposition}
\label{M-L1}
We have
$$\sum_{n\ge0}[\cM^1(n,r)]t^n
=\Exp\rbr{\bL\cdot [\bP^{r-1}] t},\qquad
\sum_{n\ge0}[\cL^1(n,r)]t^n
=\Exp\rbr{[\bP^{r-1}]t}.
$$
\end{proposition}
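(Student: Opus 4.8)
The plan is to derive both formulas from the point count \eqref{M1 PPoly} together with the duality supplied by Proposition \ref{L from M}, so that everything reduces to formal manipulations with the plethystic exponential.

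First I would treat $\cM^1(n,r)$. Since $\cM^1(n,r)$ admits a cellular decomposition, its motivic class is a polynomial in $\bL$ which coincides with the virtual Poincaré polynomial $P(\cM^1(n,r),q)$ after the substitution $q\mto\bL$. Hence \eqref{M1 PPoly} upgrades to the motivic identity
$$\sum_{n\ge0}\bL^{-n}[\cM^1(n,r)]t^n=\prod_{i=0}^{r-1}\frac1{1-\bL^i t}.$$
Substituting $t\mto\bL t$ in this power-series identity (a $t$-adically continuous endomorphism of $K'(\Var)\pser t$, hence legitimate) multiplies the coefficient of $t^n$ by $\bL^n$ on both sides, cancelling the factor $\bL^{-n}$ and yielding
$$\sum_{n\ge0}[\cM^1(n,r)]t^n=\prod_{i=1}^{r}\frac1{1-\bL^i t}.$$
It then remains to recognise the right-hand side as $\Exp([\bP^{r-1}]\bL t)$. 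For this I would use the line-element identity $(1-t)^{-\bL^i}=(1-\bL^i t)\inv$ (equivalently $[S^k\bA^i]=\bL^{ik}$), already used implicitly in the proof of Theorem \ref{motive of L,M}, together with the product formula $\Exp(\sum_n f_n t^n)=\prod_n(1-t^n)^{-f_n}$ of Proposition \ref{PS-Exp}. Since $\sum_{i=1}^r\bL^i=\bL[\bP^{r-1}]$, multiplicativity of the power structure gives $\prod_{i=1}^r(1-\bL^i t)\inv=(1-t)^{-\bL[\bP^{r-1}]}=\Exp([\bP^{r-1}]\bL t)$, which is the first formula.

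For $\cL^1(n,r)$ I would invoke Proposition \ref{L from M}. The algebra $A=kQ$ has no relations for $d=1$, so any grading $\bd$ with $d_a>0$ is admissible and, as recorded in the preceding remark, the associated $\Gm$-action has $\cM^+=\cM^1(n,r)$ and $\cM^-=\cL^1(n,r)$; as $\cM^1(n,r)$ is smooth of dimension $rn$, Proposition \ref{L from M} gives $[\cL^1(n,r)]\dual=\bL^{-rn}[\cM^1(n,r)]$. Feeding this into the first formula and collecting powers of $t$ produces
$$\sum_{n\ge0}[\cL^1(n,r)]\dual t^n=\prod_{i=1}^{r}\frac1{1-\bL^{i-r}t}=\prod_{j=0}^{r-1}\frac1{1-\bL^{-j}t}.$$
Now I would apply the involution $\dual$, which is a ring homomorphism fixing $t$ and sending $\bL\mto\bL\inv$, coefficientwise to both sides: the left-hand side becomes $\sum_n[\cL^1(n,r)]t^n$ by involutivity, while each factor on the right is sent to $(1-\bL^j t)\inv$, giving $\prod_{j=0}^{r-1}(1-\bL^j t)\inv$. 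By the same line-element identity and $\sum_{j=0}^{r-1}\bL^j=[\bP^{r-1}]$ this equals $\Exp([\bP^{r-1}]t)$, which is the second formula.

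The computation is essentially bookkeeping once the two inputs are in place, so the only genuine points requiring care are the passage from the point count to a motivic (cellular) identity and the justification that $\dual$ may be applied coefficientwise to the generating series. The main obstacle is really the verification that Proposition \ref{L from M} applies here, i.e.\ that the torus action of the preceding remark has all positive weights and the correct attractor and repeller loci; granting this, together with the cellular decomposition of $\cL^1(n,r)$ that makes its class a polynomial in $\bL$, no further difficulty arises.
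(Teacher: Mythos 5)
Your proof is correct and takes essentially the same approach as the paper: the first formula comes from upgrading the point count \eqref{M1 PPoly} to a motivic identity via the cellular decomposition of $\cM^1(n,r)$, and the second from the duality $[\cL^1(n,r)]\dual=\bL^{-rn}[\cM^1(n,r)]$ of Proposition \ref{L from M} followed by applying the involution $\dual$ coefficientwise. The only difference is cosmetic: you manipulate the explicit products $\prod_i(1-\bL^i t)\inv$ and substitute $t\mto\bL t$, where the paper carries out the same bookkeeping inside $\Exp$.
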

\begin{proof}
The first formula follows from equation \eqref{M1 PPoly} as $\cM^1(n,r)$ admits a cellular decomposition.
As $\cM^1(n,r)$ is smooth and has dimension $rn$, we conclude from Proposition \ref{L from M} that
$$[\cL^1(n,r)]\dual=\bL^{-rn}[\cM^1(n,r)].$$
Therefore
$$\sum_{n\ge0}[\cL^1(n,r)]\dual t^n
=\sum_{n\ge0}\bL^{-rn}[\cM^1(n,r)]t^n
=\Exp\rbr{\frac{\bL^r-1}{\bL-1}\bL^{1-r} t}
=\Exp\rbr{\frac{\bL^{-r}-1}{\bL\inv-1}t}$$
and taking the duals we obtain the second formula.
\end{proof}

\begin{remark}
We conclude from the above result that $\dim\cL^1(n,r)=rn-n$.
\end{remark}


\begin{theorem}[see \cite{bano_chow,bagnarol_motive}]
Given a rank $r$ locally free sheaf $E$ over a curve $X$, we have
$$\sum_{n\ge0}[\Quot(E,n)]t^n=\Exp\rbr{[X\xx\bP^{r-1}]t}.$$
\end{theorem}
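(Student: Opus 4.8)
The plan is to reduce everything to the punctual computation already carried out for curves and then apply the power structure on $K(\Var)$. First I would invoke Theorem \ref{quot-quot0} in the case $d=1$, which expresses the full generating series as a power of the punctual one:
$$\sum_{n\ge0}[\Quot(E,n)]t^n=\rbr{\sum_{n\ge0}[\Quot(\cO_{\bA^1}^{\oplus r},n)_0]t^n}^{[X]}.$$
This reduction is where the local structure of $E$ enters: since $E$ is locally free, its restriction to a formal \nbd of any point is trivial, so the punctual quotient schemes do not depend on $E$ and only the class $[X]$ appears as the exponent.

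Next I would identify the inner (punctual) series. By Proposition \ref{quiv-descript} with $d=1$ we have $\Quot(\cO_{\bA^1}^{\oplus r},n)_0\iso\cL^1(n,r)$, and Proposition \ref{M-L1} gives the closed form $\sum_{n\ge0}[\cL^1(n,r)]t^n=\Exp\rbr{[\bP^{r-1}]t}$. Substituting, the right-hand side becomes $\Exp\rbr{[\bP^{r-1}]t}^{[X]}$.

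The key step is then to simplify this power using the compatibility between the power structure and the plethystic exponential recorded in Proposition \ref{PS-Exp}, namely $f^a=\Exp(a\Log f)$. Taking $f=\Exp\rbr{[\bP^{r-1}]t}$ and $a=[X]$, and using that $\Log$ is inverse to $\Exp$, we obtain
$$\Exp\rbr{[\bP^{r-1}]t}^{[X]}=\Exp\rbr{[X]\cdot[\bP^{r-1}]t}=\Exp\rbr{[X\xx\bP^{r-1}]t},$$
where the last equality is the multiplicativity $[X]\cdot[\bP^{r-1}]=[X\xx\bP^{r-1}]$ of the product in $K(\Var)$. This yields the claimed formula.

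In this argument there is no genuine analytic obstacle: all the real work is hidden in the two cited inputs, the motivic power-structure identity of Theorem \ref{quot-quot0} and the cellular computation behind Proposition \ref{M-L1}. The only point that requires care is making sure the exponent $[X]$ interacts correctly with the infinite product defining $\Exp$; this is precisely guaranteed by the continuity axiom together with the identity $f^a=\Exp(a\Log f)$, so that raising to the power $[X]$ passes through $\Exp$ to a scalar multiplication in the argument.
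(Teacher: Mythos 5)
Your proposal is correct and follows essentially the same route as the paper's own proof: reduce to the punctual series via Theorem \ref{quot-quot0}, identify it as $\Exp\rbr{[\bP^{r-1}]t}$ using Propositions \ref{quiv-descript} and \ref{M-L1}, and then absorb the exponent $[X]$ into the plethystic exponential via Proposition \ref{PS-Exp}. The only difference is cosmetic ordering of the steps, and you are if anything slightly more explicit than the paper in citing Proposition \ref{quiv-descript} for the identification $\Quot(\cO_{\bA^1}^{\oplus r},n)_0\iso\cL^1(n,r)$.
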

\begin{proof}
By Proposition \ref{M-L1} we have
$\sum_{n\ge0}[\Quot(\cO_{\bA^1}^{\oplus r},n)_0]t^n
=\Exp\rbr{[\bP^{r-1}]t}$.
Applying Theorem \ref{quot-quot0} and
Proposition \ref{PS-Exp}
we obtain
$$\sum_{n\ge0}[\Quot(E,n)]t^n
=\Exp\rbr{[\bP^{r-1}]t}^{[X]}
=\Exp\rbr{[X][\bP^{r-1}]t}.
$$
\end{proof}

\begin{remark}
For $X=\bA^1$ and $E=\cO^{\oplus r}_{\bA^1}$, we have $\Quot(E,n)=\cM^1(n,r)$.
In this case we obtain
$$\sum_{n\ge0}[\cM^1(n,r)]t^n
=\sum_{n\ge0}[\Quot(E,n)]t^n
=\Exp\rbr{\bL\cdot [\bP^{r-1}]t}$$
which coincides with the statement of Proposition \ref{M-L1}.
\end{remark}

\begin{remark}
In the context of point-counting, plethystic exponential $\Exp([X]t)=\sum_{k\ge0}[S^kX]t^k$ corresponds to the zeta-function (for $X$ an algebraic variety over $\bF_q$)
$$Z(X;t)=\exp\rbr{\sum_{n\ge1}\frac{\#X(\bF_{q^n})}nt^n}.$$
Then the above theorem takes the form (\cf \cite{ghione_effective} for $E=\cO_X^{\oplus r}$)
$$\sum_{n\ge0}\#\Quot(E,n)t^n
=\prod_{i=0}^{r-1}Z(X\xx\bA^i;t)
=\prod_{i=0}^{r-1}Z(X;q^it).$$
\end{remark}

\sec[Quot-schemes over surfaces]
Given $n,r\ge0$, let $\cL^2(n,r)$ be
the quiver variety  from Proposition \ref{quiv-descript}.
On the other hand let $\cL(n,r)$ be the nilpotent Nakajima quiver variety for the quiver $C^1$ having one vertex and one loop.

\begin{proposition}
\label{L2 and L}
We have $\cL^2(n,r)\iso\cL(n,r)$.
\end{proposition}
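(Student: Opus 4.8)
The plan is to realise both varieties as moduli of explicit linear-algebra data and to show that these data literally coincide, once one proves that the dual framing maps vanish on the Nakajima central fibre. Concretely, a point of $\cL^2(n,r)$ is, after Proposition \ref{quiv-descript}, a triple $(B_1,B_2,I)$ in which $B_1,B_2\in\End(V)$ (with $\dim V=n$) are \emph{commuting} nilpotent operators recording the loops $x_1,x_2$, and $I=(I_1,\dots,I_r):k^r\to V$ records the arrows $f_i$, subject to the stability condition that $\im I$ generates $V$ under $B_1,B_2$, all modulo $\GL(V)$. A point of $\cL(n,r)=\cL_{\te^\f}(\Pi,\bv^\f)=\pi\inv(0)$ is a quadruple $(B_1,B_2,I,J)$ with $B_1=a$, $B_2=a^*$, with $I$ recording the framing arrows and $J:V\to k^r$ their duals, subject to the preprojective (ADHM) relation $[B_1,B_2]+IJ=0$, the same stability condition, and nilpotency, modulo $\GL(V)$.

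First I would record the evident inclusion of representation spaces. Setting $J=0$ identifies $\cR(A,\bv)$ with the $\GL_\bv$-stable closed sublocus $\set{J=0}\sbs\cR(\Pi,\bv^\f)$: under $J=0$ the component of the preprojective relation at the vertex $*$ becomes trivial, while the relation at the vertex $1$, namely $[B_1,B_2]+IJ=0$, degenerates to exactly the commutativity relation $[B_1,B_2]=0$ defining $A$. Since the two stability parameters agree in sign at $*$, their semistable loci match; hence a stable nilpotent triple in $\cL^2(n,r)$, viewed with $J=0$, defines a stable nilpotent $\Pi$-representation, yielding a morphism $\cL^2(n,r)\to\cL(n,r)$.

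The heart of the argument — and the step I expect to be the main obstacle — is the converse: every point of $\cL(n,r)=\pi\inv(0)$ already has $J=0$. Lying in $\pi\inv(0)$ forces every non-constant $\GL_\bv$-invariant function to vanish at the point; by the standard description of the invariant ring (in characteristic zero) these are generated by the traces of cyclic paths, namely $\tr\,w(B_1,B_2)$ and the scalars $J_j\,w(B_1,B_2)\,I_{j'}$ for words $w$ in two letters. Vanishing of all $\tr\,w(B_1,B_2)$ forces $B_1,B_2$ to generate a nilpotent algebra, while vanishing of $J_j\,w(B_1,B_2)\,I_{j'}=J_j\bigl(w(B_1,B_2)\,I_{j'}(1)\bigr)$ for all $w,j,j'$, combined with stability — which says precisely that the vectors $w(B_1,B_2)\,I_{j'}(1)$ span $V$ — forces $J$ to vanish on all of $V$. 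The ADHM relation then collapses to $[B_1,B_2]=0$, so the quadruple lies in $\set{J=0}$ and defines a point of $\cL^2(n,r)$.

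Finally I would conclude that the two constructions are mutually inverse $\GL_\bv$-equivariant maps between the semistable loci, hence descend to mutually inverse morphisms of the GIT quotients, establishing $\cL^2(n,r)\iso\cL(n,r)$. The only points genuinely needing care are the precise matching of the two stability conditions across the inclusion $\set{J=0}\sbs\cR(\Pi,\bv^\f)$, and the appeal to characteristic zero when passing from the vanishing of all traces to nilpotency of $B_1,B_2$; both are routine given the conventions of \S\ref{mot classes} and Proposition \ref{quiv-descript}.
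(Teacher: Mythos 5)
Your proposal is correct, and its skeleton is the same as the paper's: both directions reduce to the single key fact that on the central fibre $\pi\inv(0)$ of the Nakajima variety the dual framing maps vanish, after which the preprojective relation $[B_1,B_2]+IJ=0$ collapses to commutativity and the two sets of stability/nilpotency data match up verbatim. The difference is how that key fact is handled: the paper simply cites it (to Nakajima and Lusztig), whereas you prove it, via the Le Bruyn--Procesi description of $k[\cR(\bar Q^\f,\bv^\f)]^{\GL_{\bv^\f}}$ as generated by traces of oriented cycles, i.e.\ by $\tr w(B_1,B_2)$ and the scalars $J_j w(B_1,B_2)I_{j'}$, combined with the observation that stability makes the vectors $w(B_1,B_2)I_{j'}(1)$ span $V$. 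This is in fact the standard argument behind the cited references, and it is valid here: the characteristic-zero hypothesis you flag is part of the paper's standing assumptions, and the passage from invariants on the ambient representation space to invariants on the subvariety cut out by the relations is licensed by reductivity. So your write-up buys self-containedness (and makes explicit where stability and characteristic zero enter), at the cost of invoking the invariant-theoretic generation theorem; the paper's version is shorter but outsources exactly the step you identify as the heart of the matter. One small remark: since for the framing parameter $\te^\f=(1,0)$ every semistable point is stable, the GIT quotients involved are geometric, which is what makes your final descent argument (mutually inverse equivariant maps inducing mutually inverse morphisms of quotients) immediate.
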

\begin{proof}
The quiver variety $\cL(n,r)$ parametrizes representations of the quiver $\bar Q^\f$
having two vertices $*$ and $1$, two loops $x,x^*:1\to 1$
and arrows $f_i:*\to1$ and $f^*_i:1\to*$ for $1\le i\le r$.
The relations are
$$xx^*-x^*x+\sum_i f_if^*_i=0,\qquad \sum_i f_i^*f_i=0.$$
If $M\in\cL(n,r)$ then the linear maps $M_{f_i^*}:M_1\to M_*$ are zero (see \eg \cite{nakajima_instantons,lusztig_quiver}).
This implies that $M$ can be interpreted as a point of $\cL^2(n,r)$.
The converse is straightforward.
\end{proof}

\begin{proposition}
We have
$$\sum_{n\ge0}[\Quot(\cO_{\bA^2}^{\oplus r},n)_0]t^n
=\Exp\rbr{\frac{[\bP^{r-1}]t}{1-\bL^rt}}.$$
\end{proposition}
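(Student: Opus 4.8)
The plan is to reduce the statement entirely to results already established, since essentially all of the geometric content has been packaged into the earlier propositions. First I would invoke Proposition \ref{quiv-descript} with $d=2$, which identifies the punctual quotient scheme $\Quot(\cO_{\bA^2}^{\oplus r},n)_0$ with the nilpotent quiver variety $\cL^2(n,r)$ attached to the quiver carrying $r$ framing arrows $f_i:*\to 1$ together with two commuting loops $x_1,x_2$ at the central vertex. Passing to motivic classes is harmless here, since isomorphic varieties have equal classes in $K(\Var)$, so the generating series of $[\Quot(\cO_{\bA^2}^{\oplus r},n)_0]$ coincides with that of $[\cL^2(n,r)]$.

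Next I would transport this to the Nakajima side by Proposition \ref{L2 and L}, which gives $\cL^2(n,r)\iso\cL(n,r)$ for the Jordan quiver $C^1$. The mechanism is that on the nilpotent locus the costar maps $f_i^*:1\to *$ vanish, so the preprojective relations collapse to the single commutator relation $xx^*-x^*x=0$ between the two loops, recovering precisely the two commuting loops of the $d=2$ quiver. This is the one step where something genuinely must be checked, but it has already been carried out in the proof of Proposition \ref{L2 and L}, so here it is invoked rather than reproved.

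Finally I would substitute the explicit formula of Theorem \ref{motive of L,M}, namely $\sum_{n\ge0}[\cL(n,r)]t^n=\Exp\rbr{\frac{[\bP^{r-1}]t}{1-\bL^rt}}$, obtained there either by counting cells of the \BB decomposition or via the general quiver-variety formula \eqref{class1}. Chaining the three identities termwise then yields
$$\sum_{n\ge0}[\Quot(\cO_{\bA^2}^{\oplus r},n)_0]t^n
=\sum_{n\ge0}[\cL^2(n,r)]t^n
=\sum_{n\ge0}[\cL(n,r)]t^n
=\Exp\rbr{\frac{[\bP^{r-1}]t}{1-\bL^rt}}.$$
I do not expect any real obstacle: the argument runs exactly parallel to the curve case, where Proposition \ref{M-L1} plays the role that Theorem \ref{motive of L,M} plays here, and all the substantive work has been front-loaded into the quiver description of the punctual scheme and into the cell-counting/duality computation of its motivic class.
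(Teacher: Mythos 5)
Your proposal is correct and follows exactly the paper's own proof: chain Proposition \ref{quiv-descript} (with $d=2$), Proposition \ref{L2 and L}, and Theorem \ref{motive of L,M}. Nothing is missing, and the extra remarks about why the isomorphisms hold are consistent with the arguments the paper gives for those cited results.
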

\begin{proof}
By Propositions \ref{quiv-descript} and \ref{L2 and L} we have
$$[\Quot(\cO_{\bA^2}^{\oplus r},n)_0]\iso\cL^2(n,r)\iso\cL(n,r).$$
Now the result follows from Theorem \ref{motive of L,M}.
\end{proof}

\begin{theorem}
Given a rank $r$ locally free sheaf $E$ over a smooth surface $X$, we have
$$\sum_{n\ge0}[\Quot(E,n)]t^n
=\Exp\rbr{\frac{[X\xx \bP^{r-1}]t}{1-\bL^rt}}.$$
\end{theorem}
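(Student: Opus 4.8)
The plan is to reduce the global statement to the punctual computation that has already been established, via the power-structure formula of Theorem \ref{quot-quot0}. Concretely, I would first invoke Theorem \ref{quot-quot0} with $d=2$, which expresses the generating series of $[\Quot(E,n)]$ as the $[X]$-power of the punctual generating series:
$$\sum_{n\ge0}[\Quot(E,n)]t^n
=\rbr{\sum_{n\ge0}[\Quot(\cO_{\bA^2}^{\oplus r},n)_0]t^n}^{[X]}.$$
The key input is then the immediately preceding Proposition, which computes the punctual series as $\Exp\rbr{\frac{[\bP^{r-1}]t}{1-\bL^rt}}$. Substituting this into the power gives
$$\sum_{n\ge0}[\Quot(E,n)]t^n
=\Exp\rbr{\frac{[\bP^{r-1}]t}{1-\bL^rt}}^{[X]}.$$

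The remaining work is to simplify this $[X]$-th power of a plethystic exponential. Here I would apply Proposition \ref{PS-Exp}, which relates power structures and plethystic exponentials by $f^a=\Exp(a\Log(f))$. Taking $f=\Exp\rbr{\frac{[\bP^{r-1}]t}{1-\bL^rt}}$ and $a=[X]$, we get $\Log(f)=\frac{[\bP^{r-1}]t}{1-\bL^rt}$ since $\Log$ and $\Exp$ are mutually inverse, so
$$f^{[X]}=\Exp\rbr{[X]\cdot\frac{[\bP^{r-1}]t}{1-\bL^rt}}
=\Exp\rbr{\frac{[X\xx\bP^{r-1}]t}{1-\bL^rt}},$$
where in the last step I use the ring structure on $K(\Var)$, namely $[X]\cdot[\bP^{r-1}]=[X\xx\bP^{r-1}]$, and the fact that multiplication by $[X]$ commutes with the coefficient-wise linear structure of the argument of $\Exp$. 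This is exactly the claimed formula. The argument is the direct surface analogue of the curve case proved earlier in the excerpt, with the only difference being the punctual series input.

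I do not expect a genuine obstacle here, since every ingredient is already in place; the proof is essentially a one-line substitution once Theorem \ref{quot-quot0} and the punctual Proposition are cited. The only point requiring a moment of care is the compatibility between the $[X]$-power operation (coming from the power structure on $K(\Var)$) and the plethystic exponential, which is precisely guaranteed by Proposition \ref{PS-Exp}; I would make sure to pull the scalar $[X]$ through the exponent cleanly rather than attempting to manipulate the infinite product $\prod_{n\ge1}(1-t^n)^{-f_n}$ directly. One subtlety worth flagging, though not an obstruction, is that the power structure on $K(\Var)$ is defined via the pre-$\la$-ring structure of Remark \ref{pre-la}, so the identity $f^{[X]}=\Exp([X]\Log f)$ is an identity in $K(\Var)$ (or its localization $K'(\Var)$ after inverting $\bL$), and I would note that the whole computation takes place in $K'(\Var)\pser t$.
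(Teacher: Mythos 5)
Your proposal is correct and follows exactly the paper's own argument: invoke Theorem \ref{quot-quot0} with $d=2$, substitute the punctual series $\Exp\rbr{\frac{[\bP^{r-1}]t}{1-\bL^rt}}$ from the preceding Proposition, and use Proposition \ref{PS-Exp} to rewrite the $[X]$-power as $\Exp\rbr{[X]\cdot\frac{[\bP^{r-1}]t}{1-\bL^rt}}=\Exp\rbr{\frac{[X\xx\bP^{r-1}]t}{1-\bL^rt}}$. Your additional remarks on pulling the scalar through the exponent and working in $K'(\Var)\pser t$ are sound clarifications of the same route, not a different one.
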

\begin{proof}
By Theorem \ref{quot-quot0}
we have
$$\sum_{n\ge0}[\Quot(E,n)]t^n=
\rbr{\sum_{n\ge0}[\Quot(\cO_{\bA^2}^{\oplus r},n)_0]t^n}^{[X]}
=\Exp\rbr{\frac{[\bP^{r-1}]t}{1-\bL^rt}}^{[X]}.
$$
The last expression is equal to $\Exp\rbr{[X]\frac{[\bP^{r-1}]t}{1-\bL^rt}}$ by Proposition \ref{PS-Exp}.
\end{proof}

\begin{remark}
For $r=1$ we obtain (\cf \cite{gottsche_motive})
$$\sum_{n\ge0}[\Quot(\cO_X,n)]t^n
=\Exp\rbr{\frac{[X]t}{1-\bL t}}.$$
\end{remark}

\begin{remark}
We can write
$$\sum_{n\ge0}[\Quot(E,n)]t^n
=\Exp\rbr{\frac{[X\xx \bP^{r-1}]t}{1-\bL^rt}}
=\prod_{i=0}^{r-1}\prod_{j\ge0}\Exp([X]\bL^{i+rj}t^{j+1})
.$$
Therefore in the context of point-counting the above theorem, for an algebraic surface $X$ over a finite field $\bF_q$, takes the form (\cf \cite{yoshioka_betti} for $E=\cO_X^{\oplus r}$)
$$\sum_{n\ge0}\#\Quot(E,n)t^n
=\prod_{i=0}^{r-1}\prod_{j\ge0}Z(X;q^{i+rj}t^{j+1}).$$
\end{remark}

\begin{remark}
We have $\Quot(\cO^r_{\bA^2},n)\iso\cM^2(n,r)$, hence the above result implies
$$\sum_{n\ge0}[\cM^2(n,r)]t^n=\Exp\rbr{\frac{[\bP^{r-1}]\bL^2t]}{1-\bL^r t}}.$$
Note that $\cM^2(n,r)\sbs\cM(n,r)$ and 
$$\sum_{n\ge0} [\cM(n,r)]t^n
=\Exp\rbr{\frac{[\bP^{r-1}]\bL^{r+1}t}{1-\bL^rt}}$$
by Theorem \ref{motive of L,M}.
This implies that $\cM^2(n,r)$ and $\cM(n,r)$ are not equal in general.
However, we have an equality $\cM^2(n,1)=\cM(n,1)=\Hilb^n(\bA^2)$ (see \eg \cite{nakajima_lectures}).
\end{remark}


\providecommand{\bysame}{\leavevmode\hbox to3em{\hrulefill}\thinspace}
\providecommand{\href}[2]{#2}

\end{document}